\documentclass{amsart}

\newtheorem{theorem}{Theorem}[section]
\newtheorem{lemma}[theorem]{Lemma}

\theoremstyle{definition}

\theoremstyle{remark}
\newtheorem{remark}[theorem]{Remark}

\numberwithin{equation}{section}

% ------  NOSTRI PACCHETTI   ----------
\usepackage{geometry}
\usepackage{graphicx}
\usepackage{epstopdf}
\usepackage{amscd}
\usepackage{color}
% \usepackage[color]{showkeys}
% ----------------------------------

% --------------------------------------------------------
%  NOSTRE MACROS
% --------------------------------------------------------

  \newcommand{\p}{p}
  \newcommand{\up}{\Pzk \p}
  \newcommand{\pI}{p_I}
  \newcommand{\q}{q}
  \newcommand{\qstar}{\q^*}
  \newcommand{\barp}{\overline{\p}}
  \newcommand{\barq}{\overline{\q}}
  \newcommand{\ph}{p_h}
   \newcommand{\qh}{\q_h}
  \newcommand{\qstarh}{\q^*_h}
  \newcommand{\psiI}{\psi_I}

   %%%% vettori

%  \renewcommand{\v}{{\boldsymbol v}}

%   \newcommand{\vh}{{\v}_h}

% \newcommand{\wiz}{\mho}
\newcommand{\wiz}{\text{\tiny$\aleph$}}

%%%%%  Coppie

%%%% Coefficienti
\newcommand{\bb}{{\bf b}}
\newcommand{\diffp}{{ \kappa}}
\newcommand{\trasp}{{\bf b}}
\newcommand{\vbeta}{\boldsymbol{\beta}}
\newcommand{\diffh}{\ph-\pI}
\newcommand{\qdiff}{q^*_h}
\newcommand{\reaction}{\gamma}

%%%%%  Geometria
  
  \newcommand{\E}{E}
  
  \newcommand{\Th}{{\mathcal T}_h}
  \newcommand{\Eh}{{\mathcal E}_h}

%%%% Variabili indipendenti, Differenziali
  \newcommand{\xx}{{\bf x}}

  \newcommand{\ds}{{\rm d}s}
  
 \newcommand{\dx}{{\rm d}x}

%%%%% Insiemi numerici,
\newcommand{\RR}{ {\mathbb R}}

%%%%%  Spazi

    \newcommand{\Pp}{\mathbb P}
    \newcommand{\Vhnk}{{{\mathcal Q}_h^k}}
     \newcommand{\Vhnkt}{{{\widetilde{\mathcal Q}}_h^k}}

%    \newcommand{\Vhn}{{{\mathcal P}_h}}
%    \newcommand{\VMh}{{\mathcal V}_h}

   %\newcommand{\QMh}{{\mathcal Q}_h}

%%%%%  Operatori

    %%%%%  Operatori differenziali
    \newcommand{\Amf}{{\mathfrak L}}
    \newcommand{\Amfs}{{\mathfrak L}^*}
    \newcommand{\curl}{\operatorname{\bf curl}}
    \renewcommand{\div}{\operatorname{div}}

   %%%% Forme bilineari

\newcommand{\GG}{{\mathcal G}}

    % ERRORE

     \newcommand{\NErr}{{\mathcal E}}
      
     \newcommand{\Op}{{\mathcal O}}

%%%%%%  Proiezioni

     \newcommand{\Id}{I}

    \newcommand{\Pzk}{{\Pi}^0_{k}}
    \newcommand{\Pzkm}{\Pi^0_{k-1}}
    \newcommand{\pLdue}{{\Pi}^0_{k}}
    \newcommand{\pLduem}{\Pi^0_{k-1}}
    \newcommand{\PNk}{{\Pi^\nabla_k}}
    \newcommand{\PGk}{{\Pi^\GG_k}}

%%%%%%% Abbreviazioni di comandi LaTeX

   \newcommand{\intE}{\int_\E}

%%%%%%%%%%%%%%%%%%%%%%%%%%%%%%%%%%

\begin{document}

% \title[short text for running head]{full title}
\title[Virtual Elements for elliptic problems]{Virtual Element Methods for general second order elliptic problems on polygonal meshes}

%    author one information
% \author[short version for running head]{name for top of paper}
\author{L. Beir\~ao da Veiga}
\address{Dipartimento di Matematica, Universit\`a di Milano,
Via Saldini 50, 20133 Milano (Italy),
and IMATI del CNR, Via Ferrata 1, 27100 Pavia, (Italy).}
\email{lourenco.beirao@unimi.it}

\author{F. Brezzi}
\address{IUSS, Piazza della Vittoria 15, 27100 Pavia (Italy),
and IMATI del CNR, Via Ferrata 1, 27100 Pavia, (Italy).}
\email{brezzi@imati.cnr.it}

\author{L.D. Marini}
\address{Dipartimento di Matematica, Universit\`a di Pavia,
and IMATI del CNR, Via Ferrata 1, 27100 Pavia, (Italy).}
\email{marini@imati.cnr.it}

\author{A. Russo}
\address{Dipartimento di Matematica e Applicazioni, Universit\`a di Milano-Bicocca,
via Cozzi 57, 20125 Milano (Italy)
and
IMATI del CNR, Via Ferrata 1, 27100 Pavia, (Italy).}
\email{alessandro.russo@unimib.it}

%    \subjclass is required.
\subjclass[2010]{65N30}

\date{}

\dedicatory{}

%    Abstract is required.
\begin{abstract}
We consider the discretization of a boundary value problem for a general linear second-order elliptic operator
with smooth coefficients using the Virtual Element approach. As in \cite{Schatz74} the problem is supposed to have a unique solution, but the associated bilinear form is not supposed to be coercive. Contrary to what was previously done
for Virtual Element Methods (as for instance in \cite{volley}), we use here, in a systematic way, the
$L^2$-projection operators as designed in \cite{projectors}. In particular, the present method {\it does not} reduce
to the original Virtual Element Method of \cite{volley} for simpler problems as the classical Laplace operator
(apart from the lowest order cases). Numerical experiments show the accuracy and the robustness
of the method, and  they show as well  that a simple-minded extension of the method 
in \cite{volley} to the case of variable coefficients produces, in general, sub-optimal results.

\end{abstract}

\maketitle

% ----------------------------------------------------------------------------------------------------
\section{Introduction}\label{introduction}
% ----------------------------------------------------------------------------------------------------
The aim of this paper is to design and analyze the use of  Virtual Element Methods (in short, VEM) for the approximate solution of general linear second order elliptic problems in two dimensions. In particular we shall deal with diffusion-convection-reaction problems with variable coefficients.

For the simpler case of Laplace operator in two dimensions the Virtual Element Method in the primal form (see \cite{volley})  could be seen essentially as a re-formulation (in a simpler, more elegant and easier to analyze manner) of the Mimetic Finite Difference method as presented in \cite{Brezzi:Buffa:Lipnikov:2009}  for the lowest order case, and extended to arbitrary order in \cite{BLM11}.

Actually, in more recent times both Mimetic Finite Differences and Virtual Element Methods have been growing very fast, allowing a much wider type of discretizations (arbitrary degree, arbitrary
continuity, nonconforming or discontinuous variants) as well as different types of applications. See in particular, for Mimetic Finite Differences, \cite{Antonietti},
 \cite{MFD7}, \cite{BLM09}, \cite{BeiraodaVeiga:Manzini:2008b}, 
%\cite{Bishop}, \cite{Bochev-Hyman}
\cite{Brezzi:Lipnikov:Shashkov:2005}, \cite{Brezzi:Lipnikov:Shashkov:Simoncini:2007}, \cite{Brezzi:Lipnikov:Simoncini:2005}, \cite{Brezzi:Lipnikov:Shashkov:2006},
and mostly \cite{MFD23}, \cite{MFD22} (and the references therein),
and
\cite{projectors}, \cite{ABMV14},
\cite{Berrone-VEM}, \cite{VEM-elasticity}, \cite{super-misti}, \cite{BM13},
\cite{Brezzi:Marini:plates}, \cite{hourglass}, \cite{Gain-PhD}, \cite{Paulino-VEM}, \cite{VEM19}, \cite{VemSteklov}, \cite{POLY37} for Virtual Elements.

We point out, on the other hand, that the use of polygonal and polyhedral meshes for the approximate solution of Partial Differential Equations, but also for several other branches of Scientific Computing,
is surely not reduced to Mimetic Finite Differences or Virtual Element Methods.  Indeed,
polygonal (and then polyhedral) decompositions have already a long story, and often are
based on approaches that are substantially  different from MFDs or VEMs. We recall for instance \cite{Arroyo-Ortiz}, \cite{MESHLESS12}, \cite{GFEM13}, 
\cite{MESHLESS16}, \cite{GFEM17}, \cite{Bishop}, \cite{Bochev-Hyman},  
\cite{Cockburn-Jay-Lazarov},
\cite{DiPietro-Ern}, \cite{Droniou:Eymard:Gallouet:Herbin:2010},
\cite{GFEM62}, \cite{Gillette-2}, \cite{FHK06}, \cite{Fries:Belytschko:2010}
\cite{XFEM84}, \cite{MESHLESS24}, \cite{POLY28}, \cite{LIBROXFEM-128},
  \cite{Gillette-1},  \cite{GFEM146}, \cite{Sukumar:Malsch:2006},
\cite{CRACKXFEM180}, \cite{ST04}, \cite{Tabarraei:Sukumar:2007},  
\cite{TPPM10}, \cite{Wachspress75},  \cite{Wachspress11}, \cite{POLY47}.

Most of these methods use trial and test functions of a rather complicate nature, that often
could be computed (and integrated) only in some approximate way. The same is (even more) true for Virtual Element Methods where trial and test functions are solutions of  PDE problems inside each element. However, these local problems are not solved, not even in an approximate way,
and the general idea is (roughly speaking) to try to compute  {\it exactly} the values of the local (stiffness) bilinear form when one of the two entries is of polynomial type, and then stabilize the rest, in a rather brutal way. Keeping this in mind, it is clear that for Virtual Element Methods the extension from the constant coefficients to the variable ones is less trivial than for other methods, and in particular, simple minded approaches to variable coefficients can lead to a loss of optimality, especially for higher order methods, as we show with numerical evidence at the end of this paper.

In more recent times several other methods for polygonal decompositions have been introduced in which the trial and test functions are {\it pairs of polynomial} (instead of a single non-polynomial function). See  \cite{Bonelle:Ern:2014}, \cite{Cockburn-IMU}, \cite{Cockburn-Jay-Lazarov}, \cite{Cockburn-Guzman-HWang}, , \cite{DiPietro-Ern-1}, \cite{DiPietro-Ern-2},
\cite{DiPietro-Ern-3}, \cite{DiPietro-Ern--Lemaire}, \cite{Droniou-gradient}, \cite{Mu-Wang-Wei-Ye-Zhao}, \cite{Mu-Wang-Ye},
\cite{Mu-Wang-Ye-2}, \cite{Wang-1},
\cite{Wang-2}.
Though different, these methods surely have many points in common with each other, and with Virtual Element Methods. The main difference is that in the Virtual Element Methods we have indeed, on each element, both boundary  and internal degrees of freedom, but they refer to {\it the same function} (as it is normal for traditional Finite Element Methods), that however is not a polynomial, while in these other methods we have {\it two different functions} that are both polynomials.

However we could consider that the internal degrees of freedom refer to a different (polynomial) function, that has the same moments as the VEM one (as it is done for instance in Mimetic Finite Differences, where the degrees of freedom are treated more as {\it co-chains} rather than values attached to a specific function).  In this respect, the relationships among all these methods definitely  deserves a deeper analysis.

The most recent Virtual Element approach  (already  hinted in \cite{projectors} for dealing with Laplace operator in three dimensions and later extended to mixed formulations in \cite{super-misti}) consists in a tricky way to make the $L^2$-projection operator computable in an exact way starting from the degrees of freedom, with the idea to use, as often as possible, the $L^2$-projection of test and trial functions in place of the functions themselves.

A question that often arises when presenting Virtual Element approximations is: "Since the approximate solution is not explicitly known inside the elements, how can it be represented? And/or how can we  compute its value at points of interest that are internal to elements?" What we suggest is simply to use the $L^2-$projection of the VEM-solution onto piecewise polynomials of degree $k$. In Section \ref{Num-Exp} we provide numerical results  showing the general behavior of the error, and also the error in some internal point following this path.

An outline of the paper is as follows. After stating the problem and its formal adjoint in Section \ref{problem}, we recall in Section \ref{primal} the variational formulation. Then, in Section \ref{VEM-primal} we introduce the Virtual Element approximation. Section \ref{sec:pri:est} is devoted to prove optimal error estimates in $H^1$ and in $L^2$, given in Theorem \ref{risultato-primal} and Theorem \ref{stima-L2-primal}, respectively. Finally, numerical results are presented in Section \ref{Num-Exp}.

Throughout the paper we will use the standard notation $(\cdot\,,\,\cdot)$ or $(\cdot\,,\,\cdot)_0$ to indicate the $L^2$ scalar product. Whenever confusion may arise, we will underline the domain explicitly; for instance $(\cdot\,,\,\cdot)_{0,\E}$ will denote the $L^2(\E)$ scalar product on a generic polygon $\E$.
For every geometrical object $\Op$ and for every integer $k\ge -1$ we denote by $\Pp_k(\Op)$ the set of polynomials of degree $\le k$ on $\Op$, with $\Pp_{-1}(\Op)\equiv \{0\}$, as usual. Whenever no confusion may arise, we will simply use $\Pp_k$, without declaring explicitly the domain.

% ----------------------------------------------------------------------------------------------------
\section{The problem and the adjoint problem}\label{problem}
% ----------------------------------------------------------------------------------------------------

Let $\Omega\subset \RR^2$ be a bounded convex polygonal domain with boundary $\Gamma$, let  $\diffp$  and $\reaction$ be smooth functions $\Omega\rightarrow\RR$  with $\diffp(\xx)\ge \diffp_0>0$ for all $\xx\in\Omega$, and let $\bb$ be a smooth vector valued function $\Omega\rightarrow\RR^2$. In the sequel  $\diffp_{\max}, \reaction_{\max}$ and $b_{\max}$ will denote the ($L^{\infty}-$like) norm  of the coefficients $\diffp, \reaction, \bb$, respectively.

Assume that the problem
\begin{equation}\label{Pb-cont}
\left\{
\begin{aligned}
\Amf\,\p:=\div(-\diffp(\xx)\nabla\p + \bb(\xx)\p) + \reaction(\xx)\,\p
&= f(\xx)\quad\text{in }\Omega\\
\p &= 0\quad\text{on }\Gamma\\
\end{aligned}
\right.
\end{equation}
is solvable for any $f\in H^{-1}(\Omega)$, and that the estimates
\begin{equation}\label{bound11}
\|\p\|_{1,\Omega} \le C \|f\|_{-1,\Omega}
\end{equation}
and
\begin{equation}\label{bound02}
\|\p\|_{2,\Omega} \le C \|f\|_{0,\Omega}
\end{equation}
hold with a constant $C$ independent of $f$. We point out that  these assumptions imply, among other things, that
existence and uniqueness hold, as well, for the (formal) adjoint operator $\Amfs$ given by
\begin{equation}\label{def:adjoint}
\Amfs\p:=\div(-\diffp(\xx)\nabla\p)- \bb(\xx)\cdot\nabla\p + \reaction(\xx)\,\p.
\end{equation}
Moreover, for every $g\in L^2(\Omega)$ there exists a unique $\varphi\in H^2(\Omega)\cap H^1_0(\Omega)$ such that $\Amfs\varphi=g$, and
\begin{equation}\label{stimadj}
\|\varphi\|_{2,\Omega} \le C^* \|g\|_{0,\Omega}
\end{equation}
for a constant $C^*$ independent of $g$.
As we shall see, the 2-regularity \eqref{bound02} and \eqref{stimadj} is not strictly necessary in order to
get the results of the present work, and an $s$-regularity with $s>1$ would be sufficient. Here however we are not interested in minimizing the regularity assumptions.

We also point out that the choice of having a scalar diffusion coefficient was done just for simplicity. Having a full diffusion tensor would not change the analysis in a substantial way. Actually, in the numerical results presented in Section \ref{Num-Exp} a full tensor is used.

% -------------------------------------------------------------------------------
\section{ Variational formulation}\label{primal}
% -------------------------------------------------------------------------------

Set:
\begin{equation}\label{tre-uno}
a(\p,\q):=\int_{\Omega}\diffp \nabla \p\cdot \nabla \q\,{\rm d}x,\quad b(\p,\q):=-\int_{\Omega} \p (\trasp\cdot\nabla \q) \,{\rm d}x,\quad c(\p,\q):=\int_{\Omega}\reaction \p \,\q\,{\rm d}x
\end{equation}
and define
\begin{equation}\label{tre-due}
B(\p,\q):=a(\p,\q)+b(\p,\q)+c(\p,\q).
\end{equation}
The variational formulation of problem \eqref{Pb-cont} is
\begin{equation}
\left \lbrace{
\begin{aligned}\label{Pb-var}
&\mbox{Find } \p\in  H^1_0(\Omega) \mbox{ such that}\\
&B(\p,\q)=(f,\q)\quad \forall \q\in H^1_0(\Omega).
\end{aligned}
} \right.
\end{equation}

\begin{remark}\label{exuniq}
It is immediate to check that our assumptions on the coefficients imply that the bilinear form $B(\cdot,\cdot)$ verifies
\begin{equation}\label{cont-B}
B(\p,\q)\le M \|\p\|_1 \|q\|_1, \quad \p,\q \in H^1(\Omega)
\end{equation}
and hence
$$
\|\Amf\p\|_{-1}=\sup_{\q\in H^1_0} \frac{<\Amf\p,\q>}{\|\q\|_1}=\sup_{\q\in H^1_0}\frac{ B(\p,\q)}
{\|\q\|_1}\le M \| \p \|_1 .
$$
It is also easy to check that this, together with \eqref{bound11}, implies that
\begin{equation}\label{ellipt-B}
 \sup_{\q\in H^1_0} \frac{B(\p,\q)}{\|\q\|_1}\ge C_B \|\p\|_1 \quad \forall \p \in H^1_0(\Omega),
\end{equation}
for some constant $C_B>0$ independent of $\p$.
%
% {\color{red}NOTA  ( PER USO INTERNO?): Posto $ f=\Amf \p$ e $\q=(-\Delta_0)^{-1}f$ abbiamo
% $B(\p,\q)=(f,\q)=(-\Delta \q,\q)=|\q|_1^2\ge C_1 \|\q\|_1^2$ (Poincare), ma anche
% $\|f\|_{-1}=\|\Delta \q\|_{-1}\le \|\q\|_1$ ($\Delta$ \`e continuo da $H^1_0$ a $H^{-1}$). Quindi
% \begin{equation*}
% \frac{B(\p,\q)}{\|\q\|_1}\ge C_1\|\q\|_1\ge\frac{C_1}{\|f\|_{-1}} \ge\frac{C_1}{C\,C_\Delta}\|\p\|_{1}
% \end{equation*}
% dove la  $C$ dell'ultimo passaggio viene dalla \eqref{bound11}.}
%
On the other hand it is also well known that \eqref{cont-B} and \eqref{ellipt-B} imply existence and uniqueness of the solution of problem \eqref{Pb-var}.

\end{remark}

% -----------------------------------------------------------------------
\section{VEM approximation}\label{VEM-primal}
% -----------------------------------------------------------------------

In the present section we introduce the virtual element discretization of \eqref{Pb-var}.

% -----------------------------------------------------------------------
\subsection{The Virtual Element space}
% -----------------------------------------------------------------------

Let $\Th$ be a decomposition of $\Omega$ into star-shaped polygons $\E$, and let $\Eh$ be the set of edges $e$ of $\Th$. 

\begin{remark}\label{rem:mesh}
To be precise, we assume that (i) every element $\E$ is star-shaped with respect to every point of a disk $D_{\rho}$ of radius $\rho^{\E} h_E$ (where $h_{\E}$ is the diameter of $\E$), and (ii) that every edge $e$ of $\E$ has lenght $|e|\ge \rho^{\E} h_\E$. The first assumption could be relaxed in order to allow unions of star-shaped elements and the second one could be essentially avoided; since such technical generalizations are beyond the scope of the present work, we prefer to keep the simpler conditions stated above. When considering a {\it sequence} of decompositions $\{\Th\}_h$ we will obviously assume $\rho^{\E}\ge\rho_0>0$ for some $\rho_0$ independent of $\E$ and of the decomposition. As usual, $h$ will denote the maximum diameter of the elements of $\Th$.
\end{remark}

Following \cite{volley,projectors}, for every  integer $k \ge 1$ and for every element $\E$ we start by defining a \emph{preliminary} local space:
\begin{equation}\label{def-VhE-prel}
\Vhnkt (\E):=\{\q\in H^1(\E):~\q_{|e} \in \Pp_k(e) ~\forall e\in \partial \E,~\Delta \q \in \Pp_{k}(\E)\}.
\end{equation}
On $\Vhnkt (\E)$ the following set of linear operators are well defined. For all $\q \in \Vhnkt(\E)$:
\begin{itemize}
\item[($D_1$)] the values $q(V_i)$ at the vertices $V_i$ of $\E$,
\end{itemize}
and for $k\ge 2$
\begin{itemize}
\item[($D_2$)] the edge moments $\int_{e}\q\,\p_{k-2}\,\ds$, $\p_{k-2}\in \Pp_{k-2}(e)$, on each edge $e$ of $\E$,
\item[($D_3$)] the internal moments $\int_{\E}\q \,\p_{k-2}\,\dx$, $\p_{k-2}\in \Pp_{k-2}(\E)$.
\end{itemize}
We point out that for each element $\E$ and for all $k$ the operators $D_1$--$D_3$ satisfy the following property:
\begin{equation}\label{uni-big}
\{q\in\Pp_k(\E)\} \mbox{ and }\{D_i(q)=0, i=1,2,3\} \mbox{  imply }  \{q=0\}.
\end{equation}
Property \eqref{uni-big} implies that on each element $\E$ we can easily construct  a projection
operator from  $\Vhnkt$ to $\Pp_k$ that depends only on
$D_1$--$D_3$ and is explicitly computable starting from them. Let us see how.  Let
$n_V$ be the number of vertices of $E$, and let
\begin{equation}\label{dimD}
n_D:=n_V k+k(k-1)/2
\end{equation}
be the ``cardinality'' of $D_1$--$D_3$ (with obvious meaning). Consider the mapping
from $\Vhnkt(\E)$ to $\RR^{n_D}$ defined by $Dq:=(D_1$--$D_3)(q)$, and choose
a bilinear symmetric positive form $\GG$ on $\RR^{n_D}\times\RR^{n_D}$ (for
instance, the Euclidean scalar product on $\RR^{n_D}$). For every $q\in\Vhnkt(\E)$
we define $\PGk q\in\Pp_k$ as the unique solution of
\begin{equation}\label{defG}
\GG(Dq-D\PGk q,Dz)=0\qquad\forall\, z \in\, \Pp_k.
\end{equation}
It is obvious that $\PGk q_k\equiv q_k$ for every $q_k\in\Pp_k$, and also that
$\PGk q$ depends only on the values of $D_1 q$, $D_2 q$, and $D_3 q$. It can be rather
easily proved that {\it every projection operator $\Vhnkt(\E)\rightarrow\RR^{n_D}$ depending  only on the values of $D_1$--$D_3$ can be obtained by \eqref{defG} for a suitable choice
of the bilinear form $\GG$}. It is also obvious that collecting all the local projection
operators we can construct every global projection operator from $\Vhnkt$ to the space
of piecewise $\Pp_k$ functions.

Here however (both for {\it historical} reasons and for convenience of computation) we
will focus our attention on a particular choice of projection operator. For this we  recall from \cite{volley,projectors} the definition of the operator $\PNk$:
% from $H^1_0(\Omega)$ onto the $\Th-$piecewise polynomial (discontinuous) space of degree $k$.
for any $q \in H^1_0(\Omega)$, the function $\PNk q$ on each element $\E$ is a polynomial in $\Pp_k(\E)$, defined by
\begin{equation}\label{defPina}
(\nabla(\PNk \q-\q),\nabla p_k)_{0,\E}=0\quad\mbox{and}\quad \int_{\partial\E}(\PNk \q-\q)\ds=0
\qquad\forall p_k\in\Pp_k.
\end{equation}
This operator is well defined on $\Vhnkt (\E)$ and, most important, for all $\q \in \Vhnkt (\E)$ the polynomial $\PNk\q$ can be computed using only the values of the operators (D) calculated on $\q$. This follows easily with an integration by parts, see for instance \cite{volley}.

We are now ready to introduce our local Virtual space
\begin{equation}\label{def-VhE}
\Vhnk (\E):=\{\q\in \Vhnkt (\E) \: : \: \int_\E \q\,\p_k \,\dx= \int_\E (\PNk\q) \p_k \,\dx \ \ \forall \p_k \in (\Pp_k/\Pp_{k-2}(\E))\} ,
\end{equation}
where the space $\big(\Pp_k/\Pp_{k-2}(\E)\big)$ denotes the polynomials in $\Pp_k(\E)$ that are $L^2(\E)$ orthogonal to $\Pp_{k-2}(\E)$.
The corresponding global space is:
\begin{equation}\label{def-Vh}
\Vhnk :=\{\q\in H^1_0(\Omega):~\q_{|\E} \in \Vhnk(\E)~\forall \E\in \Th\}.
\end{equation}
Let now $\Pzk$ denote the $L^2-$ projection onto $\Pp_k$, defined locally, as usual, by
\begin{equation}\label{projections}
(\q-\pLdue \q, p_k)_{0,\E} =0 \qquad \forall p_k\in\Pp_k .
% \quad \textrm{ or } \quad
% (\bv -\Pzk \bv, {\bf p}_k) = 0\quad\forall {\bf \p}_k\in [\Pp_k]^2 .
\end{equation}
For simplicity of notation, in the following we will denote by the same symbol
also the $L^2-$ projection of vector valued functions onto the polynomial space $[\Pp_k]^2$.

%\begin{remark}
We note that it can be proved, see again \cite{volley,projectors} that the set of linear operators $(D)$ are a {set of degrees of freedom} for the virtual space $\Vhnk(\E)$.
%\end{remark}

Clearly the degrees of freedom $(D)$ define an {\it interpolation operator} that associates to each
smooth enough function $\varphi$ its {\it interpolant} $\varphi_I\in\Vhnk(\E)$ that shares
with $\varphi$ the values of the degrees of freedom.
Moreover, the virtual space $\Vhnk(\E)$ satisfies the following four properties:
\begin{itemize}
\item $\Pp_k(\E) \subseteq \Vhnk(\E)$ (trivial to check);
\item for all $q\in\Vhnk(\E)$, the function $\PNk q$ can be explicitly computed from the degrees of freedom $(D)$ of $q$ (see \cite{volley,projectors});
\item for all $q\in\Vhnk(\E)$, the function $\Pzk q$ can be explicitly computed from the degrees of freedom $(D)$ of $q$ (see \cite{projectors});
\item for all $q\in\Vhnk(\E)$, the vector function $\Pzkm \nabla q$ can be explicitly computed from the degrees of freedom $(D)$ of $q$ (see below).
\end{itemize}
While the second and third properties above can be found in the literature, and thus are not detailed here, we need to spend some words on the last one. In order to compute $\Pzkm \nabla q$, for all $\E\in\Th$ we must be able to calculate
$$
\int_\E \nabla q \cdot {\bf p}_{k-1}\,{\rm d}x \quad %\forall q \in \Vhnk(\E),
\
\forall {\bf p}_{k-1} \in [\Pp_{k-1}(\E)]^2 .
$$
An integration by parts, denoting by ${\bf n}$ the outward unit normal to the element boundary $\partial\E$, gives
$$
\int_\E \nabla q \cdot {\bf p}_{k-1}\,{\rm d}x = - \int_\E q \, \textrm{div} ({\bf p}_{k-1})\,{\rm d}x
+ \int_{\partial\E} q \, ({\bf p}_{k-1}\cdot{\bf n}) \,\ds .
$$
The first term in the right hand side above clearly depends only on the moments of $q$ appearing in $(D_3)$. The second term can also be computed since $q$ is a polynomial of degree $k$ on each edge and therefore $q_{|\partial\E}$ is uniquely determined by the values of $(D_1)$ and $(D_2)$.
Needless to say, all the above properties extend in an obvious way to the global space \eqref{def-Vh}. In particular, we point out that, for a smooth function $\varphi \in H^1_0(\Omega)$, its global interpolant $\varphi_I$ is in $Q^k_h$.

% -------------------------------------

We end this section by showing some simple bounds on the operator $\PNk$.
Applying \eqref{defPina} for $p_k=\PNk \q$ we have
$$\|\nabla\PNk \q\|_{0,\E}^2=(\nabla \q, \nabla\PNk \q)_{0,\E}\le \|\nabla \q\|_{0,\E}\,\|\nabla
 \PNk \q\|_{0,\E}$$
giving  immediately
\begin{equation}\label{boundPN}
\|\nabla\PNk \q\|_{0,\E}\le \|\nabla \q\|_{0,\E}.
\end{equation}
Moreover, always from the definition \eqref{defPina},
$$(\nabla(\q-\PNk\q),\nabla(\q-\PNk\q))_{0,\E}=(\nabla(\q-\PNk\q),\nabla \q)_{0,\E}\le
|\q-\PNk\q|_{1,\E} |\nabla\q|_{0,\E}$$
that immediately gives
\begin{equation}\label{boundPN1}
|\q-\PNk \q|_{1,\E}\le |\q|_{1,\E} .
\end{equation}
Finally, using again  the definition \eqref{defPina} we have
%$$
%(\nabla(\PNk \q-\q),\nabla \PNk \q)_{0,\E}=(\nabla(\PNk \q-\q),\nabla \pLdue \q)_{0,\E}=0 ,
%$$
%so that
\begin{equation*}
%\begin{aligned}
\|\nabla (\q- \PNk\q)\|^2_{0,\E}=( \nabla(\q-\PNk\q),\nabla(\q-\pLdue\q) )_{0,\E}
\le \|\nabla (\q-\PNk\q)\|_{0,\E}\|\nabla(\q-\pLdue\q) \|_{0,\E},
%&=  \| \nabla\p-\nabla\pLdue\p +\nabla\PNk \pLdue\p-\nabla\PNk\p\|_{0,\E}\\
%&\le \| \nabla(\p-\pLdue\p)\|_{0,\E} +\|\nabla\PNk (\pLdue\p-\p)\|_{0,\E}\\
%&\le 2 \| \nabla(\p-\pLdue\p)\|_{0,\E} .
%\end{aligned}
\end{equation*}
giving
\begin{equation}\label{spostata}
\|\nabla (\q-\PNk\q)\|_{0,\E}\le \|\nabla(\q-\pLdue\q) \|_{0,\E}.
\end{equation}
%we also recall from \eqref{boundPN} that $\|\nabla\PNk \p\|_{0,\E} \le \|\nabla \p \|_{0,\E}$. Then, adding and subtracting terms,
%\begin{equation}\label{spostata}
%\begin{aligned}
%\|\nabla \p-\nabla \PNk\p\|_{0,\E}&=\| \nabla\p-\nabla\pLdue\p+\nabla\pLdue\p -\nabla \PNk \pLdue\p +\nabla\PNk \pLdue\p-\nabla\PNk\p\|_{0,\E} \\
%&=  \| \nabla\p-\nabla\pLdue\p +\nabla\PNk \pLdue\p-\nabla\PNk\p\|_{0,\E}\\
%&\le \| \nabla(\p-\pLdue\p)\|_{0,\E} +\|\nabla\PNk (\pLdue\p-\p)\|_{0,\E}\\
%&\le 2 \| \nabla(\p-\pLdue\p)\|_{0,\E} .
%\end{aligned}
%\end{equation}

% -----------------------------------------------------------------------
\subsection{The discrete problem}
% -----------------------------------------------------------------------

We now introduce the discrete bilinear forms that will be used in the method. Since we will mostly work on a generic element $\E$, we will denote by $a^{\E}(\cdot,\cdot), b^{\E}(\cdot,\cdot), c^{\E}(\cdot,\cdot),$ and $B^{\E}(\cdot,\cdot)$ the restriction to $\E$ of the corresponding bilinear forms defined in \eqref{tre-uno}-\eqref{tre-due}.
Let $S^{\E}(\p,\q)$ be a symmetric bilinear form on $\Vhnk(\E) \times \Vhnk(\E)$
that scales like $a^{\E}(\cdot, \cdot)$ on the kernel of $\PNk$. More precisely, we assume that  $\exists \,\alpha_*,\alpha^*$ independent of $h$ with $0<\alpha_*\le \alpha^*$ such that
\begin{equation}
\alpha_* a^{\E}(\qh,\qh) \le S^{\E}(\qh,\qh)\le \alpha^* a^{\E}(\qh,\qh)
\quad \forall \qh \in \Vhnk(\E) \textrm{ with } \PNk\qh=0.
\end{equation}
Examples on how to build the bilinear form above can be found in \cite{volley,hitchhikers}.
Note that, due to the symmetry of $S^{\E}$, this implies, for all $\ph,\qh \in \Vhnk(\E)$ with $\PNk\ph=\PNk\qh=0$,
\begin{equation}\label{cont-S}
S^{\E}(\ph,\qh)\le (S^{\E}(\ph,\ph))^{1/2}(S^{\E}(\qh,\qh))^{1/2}\le \alpha^* (a^{\E}(\ph,\ph))^{1/2}(a^{\E}(\qh,\qh))^{1/2}.
\end{equation}
We can now define, on each element $\E\in\Th$ and for every $\p$, $\q$ in $\Vhnk(\E)$, the local forms and loading term:
\begin{equation}\label{def:forms}
\begin{aligned}
&a^{\E}_h(\p,\q):= \intE \diffp [\Pzkm\nabla \p]\cdot[\Pzkm\nabla \q]\,\dx
+ S^{\E}((\Id-\PNk)\p,(\Id-\PNk)\q) \\
& b^{\E}_h(\p,\q):=-\intE [\pLduem \p]\,[\trasp \cdot \Pzkm \nabla \q]\,\dx,\\
&c^{\E}_h(\p,\q):=\intE \reaction [\pLduem \p]\,[\pLduem \q]\,\dx, \quad (f_h, \q)_{\E}:=  \int_{\E}  f\; \pLduem \q \, \dx,\\
&B^{\E}_h(\p,\q):=a^{\E}_h(\p,\q)+b^{\E}_h(\p,\q)+c^{\E}_h(\p,\q).
\end{aligned}
\end{equation}
We just recall that, since $\PNk$ is a projection, then
\begin{equation}\label{4.15}
S^{\E}
((\Id-\PNk)p_k,(\Id-\PNk)\q)
%=S^{\E}((\Id-\PNk)\q,(\Id-\PNk)p_k)
=0
\quad\forall p_k\in\Pp_k,\;\forall \q\in \Vhnk(\E),
\end{equation}
and thus, since $S^{\E}$ is symmetric, the $S^\E$ term will vanish whenever one of the two entries of $a^{\E}_h(\cdot,\cdot)$ is a polynomial in $\Pp_k$.

\noindent
Then we set for all $\p,\q \in \Vhnk$
\begin{equation*}
\begin{aligned}
&a_h(\p,\q):=\sum_{\E} a^{\E}_h(\p,\q) , \quad
b_h(\p,\q):=\sum_{\E}b^{\E}_h(\p,\q), \\
& c_h(\p,\q):=\sum_{\E} c^{\E}_h(\p,\q), \quad (f_h, \q):=  \sum_{\E}(f_h, \q)_{\E},\\
\end{aligned}
\end{equation*}
and
\begin{equation}\label{defB}
B_h(\p,\q):=a_h(\p,\q)+b_h(\p,\q)+c_h(\p,\q)= \sum_{\E} B^{\E}_h(\p,\q).
\end{equation}
The approximate problem is:
\begin{equation}
\left \lbrace{
\begin{aligned}\label{vem-approx-n}
&\mbox{Find } \ph\in \Vhnk \mbox{ such that}\\
&B_h(\ph,\q)=(f_h,\q)\quad \forall \q\in \Vhnk.
\end{aligned}
} \right.
\end{equation}

\begin{remark}
The bilinear forms $b_h^\E$ and $c_h^\E$ in \eqref{def:forms} are well defined for all $\p,\q \in H^1(\E)$, as well as the global forms $b_h$ and $c_h$, which are well defined on the whole $H^1_0(\Omega)$.
This does not hold for $a_h^\E$, due to the presence of the stabilizing term $S^\E$ that is defined only on $\Vhnk(\E)$.
\end{remark}

\begin{remark}\label{rem-sm}
We recall that the choice indicated in \cite{volley} would have suggested to define
\begin{equation} \label{simple-minded}
a^{\E}_h(\p,\q):= \intE \diffp [\nabla\PNk \p]\cdot[\nabla \PNk\q]\,\dx
+ S^{\E}((\Id-\PNk)\p,(\Id-\PNk)\q) .
\end{equation}
Actually, it can be easily seen that for $k=1$ this coincides with our choice \eqref{def:forms}. This is not the case for $k\ge 2$. In particular, a deeper analysis shows heavy losses in the order of convergence for $k\ge 3$. In Section \ref{Num-Exp} we provide an example for $k= 4$.
On the other hand, it can be shown that if $\diffp \nabla \p$ happens to be a
gradient the choice \eqref{simple-minded} does work.
\end{remark}
% -----------------------------------------------------------------------
\section{Error estimates}\label{sec:pri:est}
% -----------------------------------------------------------------------

In the present section we derive error estimates for the proposed method.

% -----------------------------------------------------------------------
\subsection{Preliminary results}
% -----------------------------------------------------------------------

We now present some preliminary results useful in the sequel. We start by the following approximation lemma, that mainly comes from the mesh regularity assumptions in Remark \ref{rem:mesh} and standard approximation results on polygonal domains (see for instance \cite{volley,VemSteklov}).

Here and in the sequel $C$ will denote a generic positive constant independent of $h$, with different meaning in different occurrencies, and generally depending on the coefficients of the operator $\Amf$. Whenever needed to better follow the steps of the proofs, for a smooth scalar or vector-valued function $\wiz$, we shall use $C_{\wiz}$  to denote a constant depending on $\wiz$ and possibly on its derivatives up to the needed order.

\begin{lemma}
There exists a positive constant $C = C(\rho_0,k)$ such that, for all $\E$ in $\Th$ and all smooth enough functions $\varphi$ defined on $\E$, it holds
$$
\begin{aligned}
& \| \varphi - \pLdue\varphi \|_{m,\E} \le C h_\E^{s-m} | \varphi |_{s,\E}
\qquad m,s \in \mathbb{N}, \  m \le s \le k+1, \\
& \| \varphi - \PNk\varphi \|_{m,\E} \le C h_\E^{s-m} | \varphi |_{s,\E} ,
\qquad m,s \in \mathbb{N}, \  m \le s \le k+1, \ s \ge 1 ,\\
& \| \varphi - \varphi_I \|_{m,\E} \le C h_\E^{s-m} | \varphi |_{s,\E} ,
\qquad m,s \in \mathbb{N}, \  m \le s \le k+1, \ s \ge 2 . \\
\end{aligned}
$$
\end{lemma}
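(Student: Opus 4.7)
The plan is to reduce all three statements to the classical Bramble--Hilbert / Dupont--Scott polynomial approximation estimate on star--shaped domains, whose applicability is guaranteed by the mesh regularity assumption in Remark~\ref{rem:mesh}. Namely, for every integer $s$ with $0\le s\le k+1$ and every $\varphi\in H^s(\E)$, there exists $q_\varphi\in\Pp_k(\E)$ such that
\[
\|\varphi-q_\varphi\|_{m,\E}\le C\, h_\E^{\,s-m}\,|\varphi|_{s,\E}
\qquad 0\le m\le s,
\]
with $C$ depending only on $\rho_0$ and $k$ (cf. \cite{volley,VemSteklov}). This single ingredient, combined with the optimality or stability properties of each operator plus, when $m\ge 1$, the polynomial inverse inequality $\|p_k\|_{m,\E}\le C h_\E^{-m}\|p_k\|_{0,\E}$ valid on shape--regular polygons, will yield the three bounds.

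The $L^2$--projection estimate is the easiest: in the case $m=0$ the variational definition \eqref{projections} gives optimality, $\|\varphi-\Pzk\varphi\|_{0,\E}\le\|\varphi-q_\varphi\|_{0,\E}$, and for $m\ge 1$ one adds and subtracts $q_\varphi$, uses $\Pzk q_\varphi=q_\varphi$, and applies the inverse estimate to the polynomial $\Pzk(\varphi-q_\varphi)$ together with $L^2$--stability of $\Pzk$. For the $\PNk$ estimate, exploit the polynomial invariance $\PNk q_\varphi=q_\varphi$ to write
$\varphi-\PNk\varphi=(\varphi-q_\varphi)-\PNk(\varphi-q_\varphi)$: the case $m=1$ follows at once from \eqref{boundPN1} applied to $\varphi-q_\varphi$; for $m=0$ one uses the second condition in \eqref{defPina}, i.e. that $\int_{\dE}(\varphi-\PNk\varphi)\ds=\int_{\dE}(q_\varphi-\PNk q_\varphi)\ds$, together with a Poincar\'e--type inequality with prescribed boundary average on $\E$ to convert the $H^1$--bound into an $L^2$--bound; higher $m$ are again handled through the polynomial inverse estimate.

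The genuine difficulty lies in the interpolation estimate, for which the main obstacle is the local stability of $\varphi\mapsto\varphi_I$ on $\Vhnk(\E)$. Since $s\ge 2$, in two dimensions the Sobolev embedding $H^s(\E)\hookrightarrow C^0(\overline{\E})$ makes the vertex values $(D_1)$ meaningful and the trace theorem gives control of the edge moments $(D_2)$, while the volume moments $(D_3)$ are directly bounded by $\|\varphi\|_{0,\E}$; a Piola--type scaling argument based on $\rho_0$ then produces the $h_\E$--uniform stability $\|\varphi_I\|_{m,\E}\le C\,\|\varphi\|_{s,\E}$ with the correct scaling in $h_\E$. Once this is available, the proof is standard: since $q_\varphi\in\Pp_k(\E)\subset\Vhnk(\E)$ satisfies $(q_\varphi)_I=q_\varphi$, I would decompose
$\varphi-\varphi_I=(\varphi-q_\varphi)-(\varphi-q_\varphi)_I$
and combine the stability of $\varphi\mapsto\varphi_I$ applied to $\varphi-q_\varphi$ with the Bramble--Hilbert bound on $\varphi-q_\varphi$. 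The stability of the VEM interpolant is classical but delicate, and it is precisely the step for which the regularity requirement $s\ge 2$ is used.
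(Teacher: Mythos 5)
The paper does not actually prove this lemma: it is stated as following ``from the mesh regularity assumptions in Remark~\ref{rem:mesh} and standard approximation results on polygonal domains,'' with a pointer to \cite{volley,VemSteklov}. Your reconstruction is precisely the standard argument those references use --- Bramble--Hilbert/Dupont--Scott on star-shaped domains, polynomial invariance of each operator, $L^2$-optimality of $\pLdue$, the bound \eqref{boundPN1} plus the zero-mean condition in \eqref{defPina} for $\PNk$, and stability of $\varphi\mapsto\varphi_I$ for the third estimate --- so there is nothing to compare it against except to say it is consistent and essentially correct.

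One caveat on the only delicate step, which you rightly single out: the stability of the interpolation operator on $\Vhnk(\E)$ is \emph{not} obtained by a Piola-type pullback to a reference element, because the local virtual space varies with the polygon (number of vertices, and the space itself is defined through a local PDE), so there is no fixed finite-dimensional space on which to invoke norm equivalence. What is actually needed is an estimate showing that the $H^1(\E)$-norm of a virtual function is controlled, uniformly over the shape-regular class of Remark~\ref{rem:mesh}, by its degrees of freedom $(D_1)$--$(D_3)$; this follows from an a priori bound for the local elliptic problem defining $\Vhnkt(\E)$ (boundary data polynomial on each edge, Laplacian a polynomial fixed by the internal moments) together with scaling of the boundary and moment data, plus the trace/Sobolev embeddings you mention (which is where $s\ge 2$ enters). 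With that substitution your outline is complete; the rest of the proposal needs no correction.
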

We also have the following continuity lemma.

\begin{lemma}\label{cont-primal}
The bilinear form $B_h(\cdot,\cdot)$ is continuous in $\Vhnk \times \Vhnk$, that is,
\begin{equation}\label{cont-Bh}
 B_h(\p,\q)\le C_{\diffp,\trasp,\reaction} \|\p\|_1\|\q\|_1 \quad \p,\q \in \Vhnk ,
\end{equation}
with $C_{\diffp,\trasp,\reaction}$ a positive constant depending on $\diffp,\trasp,\reaction$ but independent of $h$.
\end{lemma}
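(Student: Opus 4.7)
The plan is to show continuity of $B_h$ by splitting $B_h=a_h+b_h+c_h$, bounding each local contribution by $\|\p\|_{1,\E}\|\q\|_{1,\E}$ up to a constant depending on $\|\diffp\|_{L^\infty}$, $\|\bb\|_{L^\infty}$, $\|\reaction\|_{L^\infty}$, and then summing over elements via the Cauchy--Schwarz inequality in the sum. The main tools are: (i) $L^\infty$ bounds on the coefficients, (ii) the $L^2$-nonexpansiveness of the projection $\pLduem$ (applied both to scalars and componentwise to vector fields), (iii) the stability bound \eqref{cont-S} on $S^\E$, and (iv) the estimate \eqref{boundPN1} on $I-\PNk$.

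For the convection term $b_h^\E$ I would simply write
\[
|b_h^\E(\p,\q)|\le b_{\max}\,\|\pLduem \p\|_{0,\E}\,\|\pLduem \nabla \q\|_{0,\E}\le b_{\max}\,\|\p\|_{0,\E}\,|\q|_{1,\E},
\]
and for the reaction term $c_h^\E$ analogously
\[
|c_h^\E(\p,\q)|\le \reaction_{\max}\,\|\pLduem \p\|_{0,\E}\,\|\pLduem \q\|_{0,\E}\le \reaction_{\max}\,\|\p\|_{0,\E}\,\|\q\|_{0,\E}.
\]
For the diffusion term I split $a_h^\E$ into its consistency part and its stabilization part. The consistency part is bounded by Cauchy--Schwarz and $L^2$-nonexpansiveness of $\pLduem$ on the vector field $\nabla(\cdot)$:
\[
\Bigl|\int_\E \diffp\,[\pLduem\nabla\p]\cdot[\pLduem\nabla\q]\,\dx\Bigr|\le \diffp_{\max}\,|\p|_{1,\E}\,|\q|_{1,\E}.
\]
For the stabilization I apply \eqref{cont-S} to $(I-\PNk)\p$ and $(I-\PNk)\q$, then use $a^\E(v,v)\le \diffp_{\max}\,|v|_{1,\E}^2$ together with \eqref{boundPN1} to obtain
\[
|S^\E((I-\PNk)\p,(I-\PNk)\q)|\le \alpha^*\,\diffp_{\max}\,|\p|_{1,\E}\,|\q|_{1,\E}.
\]

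Putting the three pieces together on each $\E$ gives $|B_h^\E(\p,\q)|\le \widetilde C_{\diffp,\trasp,\reaction}\,\|\p\|_{1,\E}\,\|\q\|_{1,\E}$ with $\widetilde C_{\diffp,\trasp,\reaction}$ depending only on $\diffp_{\max},b_{\max},\reaction_{\max},\alpha^*$. Summing over $\E\in\Th$ and applying discrete Cauchy--Schwarz yields \eqref{cont-Bh}. There is no real obstacle: the slightly delicate step is the stabilization bound, where one must remember that $S^\E$ acts only on $\Vhnk(\E)$, which is why \eqref{cont-S} and \eqref{boundPN1} (rather than a generic $L^\infty$/Sobolev estimate) are the right tools; all other estimates are direct consequences of the nonexpansiveness of $L^2$-projections and of Cauchy--Schwarz.
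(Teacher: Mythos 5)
Your proof is correct and follows essentially the same route as the paper: bound $b_h$ and $c_h$ by the $L^2$-nonexpansiveness of the projections, bound the consistency part of $a_h$ by Cauchy--Schwarz, and control the stabilization term via \eqref{cont-S} together with \eqref{boundPN1}. The only cosmetic difference is that you work element by element and sum at the end, whereas the paper states the bounds globally; the constants and the key inequalities are identical.
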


\begin{proof}
The continuity of $b_h$ and $c_h$ is obvious, and actually holds on the whole $H^1_0(\Omega)$ space. We have
\begin{equation}\label{cont-bhch}
b_h(\p,\q)\le b_{\max} \|\p\|_0 |\q|_1,\quad c_h(\p,\q)\le \reaction_{\max} \|\p\|_0\|\q\|_0,\quad \p,\q \in H^1_0(\Omega) .
\end{equation}
The continuity of $a_h$ is proved upon observing that, thanks to \eqref{cont-S} and \eqref{boundPN1},
\begin{equation}\label{X-6}
\begin{aligned}
S^{\E}((\Id-\PNk)\p,(\Id-\PNk)\q))&\le \alpha^* \diffp_{\max} |\p-\PNk\p|_{1,\E}|\q-\PNk\q|_{1,\E}\\
&\le \alpha^* \diffp_{\max} |\p|_{1,\E}|\q|_{1,\E}.
\end{aligned}
\end{equation}
Thus:
\begin{equation}\label{cont-ah}
a_h(\p,\q)\le (1+\alpha^*) \diffp_{\max}|\p|_1 |\q|_1 \quad \p,\q \in \Vhnk ,
\end{equation}
and the result follows.
\end{proof}
In many occasions we will need to estimate the difference between continuous and discrete bilinear forms. This is done once and for all in the following preliminary Lemma.

\begin{lemma}\label{general}
Let $\E\in\Th$, let $\mu$ be a smooth function on $\E$, and let $\p,\q$ denote smooth scalar or vector-valued functions  on $\E$.
%We denote by $\barp$ and $\barq$ the $L^2$  projections of $\p$ and $\q$ onto $\Pp_k(\E)$ (or $[\Pp_k(\E)]^2$),
For a generic $\varphi \in L^2(\E)$  (or in $(L^2(\E))^2$)
we define
\begin{equation}
%\NErr^k_\E(\varphi):=\|\varphi-\overline{\varphi}\|_{0,\E}.
\NErr^k_\E(\varphi):=\|\varphi-\Pzk{\varphi}\|_{0,\E}.
\end{equation} Then we have the estimate:
\begin{equation}\label{gen1}
(\mu \p,\q)_{0,\E}-(\mu \Pzk \p,\Pzk\q)_{0,\E}\le \NErr^k_\E(\mu \p)\NErr^k_\E(\q)+\NErr^k_\E(\mu \q)\NErr^k_\E(\p)\\
+C_{\mu}\NErr^k_\E(\p )\NErr^k_\E(\q),
\end{equation}
where  $C_{\mu}$ is a constant depending on $\mu$.
\end{lemma}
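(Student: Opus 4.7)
The plan is to exploit the orthogonality $(r - \Pzk r) \perp \Pp_k$ for any $r$ in $L^2(\E)$, together with a symmetric splitting of the error into the two factors $p - \Pzk p$ and $q - \Pzk q$.

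First I would write
\[
(\mu \p,\q)_{0,\E}-(\mu \Pzk \p,\Pzk \q)_{0,\E}
= (\mu \p,\q-\Pzk \q)_{0,\E} + (\mu(\p-\Pzk \p),\Pzk \q)_{0,\E},
\]
by adding and subtracting $(\mu \p,\Pzk \q)_{0,\E}$. This isolates the two ``error directions'' in $\p$ and $\q$ separately.

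For the first piece, since $\Pzk(\mu \p)\in\Pp_k$ and $\q-\Pzk \q$ is $L^2(\E)$-orthogonal to $\Pp_k$, we can insert it for free:
\[
(\mu \p,\q-\Pzk \q)_{0,\E} = (\mu \p - \Pzk(\mu \p),\q-\Pzk \q)_{0,\E} \le \NErrk_\E(\mu \p)\,\NErrk_\E(\q).
\]
For the second piece, rewrite $\Pzk \q = \q - (\q-\Pzk \q)$ to get
\[
(\mu(\p-\Pzk \p),\Pzk \q)_{0,\E}
= (\mu(\p-\Pzk \p),\q)_{0,\E} - (\mu(\p-\Pzk \p),\q-\Pzk \q)_{0,\E}.
\]
In the first term on the right, use the same orthogonality trick (now $\Pzk(\mu \q)\in\Pp_k$ and $\p-\Pzk \p \perp \Pp_k$) to obtain
\[
(\mu(\p-\Pzk \p),\q)_{0,\E} = (\p-\Pzk \p,\mu \q - \Pzk(\mu \q))_{0,\E}\le \NErrk_\E(\p)\,\NErrk_\E(\mu \q).
\]
The remaining term is simply bounded by Cauchy--Schwarz and the $L^\infty$ norm of $\mu$ on $\E$, yielding a bound of $C_\mu \NErrk_\E(\p)\NErrk_\E(\q)$ with $C_\mu := \|\mu\|_{L^\infty(\E)}$.

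Summing the three contributions gives exactly \eqref{gen1}. There is no real obstacle here: the only slightly delicate point is the order in which one adds and subtracts, to ensure that the ``mixed'' term $(\p-\Pzk \p, \mu (\q-\Pzk \q))$ appears with $\mu$ outside both projections (so that it absorbs into the $C_\mu$ factor), while the two asymmetric terms keep $\mu$ attached to the variable whose full $\NErrk_\E(\mu\cdot)$ we wish to retain.
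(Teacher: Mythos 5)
Your proof is correct and follows essentially the same route as the paper: the same initial splitting into $(\mu\p,\q-\Pzk\q)_{0,\E}+(\mu(\p-\Pzk\p),\Pzk\q)_{0,\E}$, the same insertions of $\Pzk(\mu\p)$ and $\Pzk(\mu\q)$ via orthogonality, and the same residual mixed term absorbed into $C_\mu=\|\mu\|_{L^\infty(\E)}$. The only difference is the (immaterial) order of the add-and-subtract manipulations in the second piece.
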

\begin{proof}
For simplifying the notation we will set $\barp:=\Pzk \p,~\barq:=\Pzk \q$.
By adding and subtracting terms, and by the definition of projection we have
\begin{equation}\label{gen1-bis}
\begin{aligned}
(\mu \p,\q)_{0,\E}-&(\mu \barp,\barq)_{0,\E}=(\mu \p,\q-\barq)_{0,\E}+(\p-\barp,\mu\barq)_{0,\E}\\
&=(\mu \p- \overline{\mu\p},\q-\barq)_{0,\E}+(\p-\barp,\mu \barq-\overline{\mu \q})_{0,\E}\\
&=(\mu \p- \overline{\mu\p},\q-\barq)_{0,\E}+(\p-\barp,\mu \barq-\overline{\mu \q}+\mu\q-\mu\q)_{0,\E}\\
&=(\mu \p- \overline{\mu\p},\q-\barq)_{0,\E}+(\p-\barp,\mu\q-\overline{\mu \q})_{0,\E}-(\p-\barp,\mu(\q- \barq))_{0,\E},\\
\end{aligned}
\end{equation}
and  the result follows by Cauchy-Schwarz inequality with $C_\mu=\|\mu\|_{\infty}$.
\end{proof}

The following result follows immediately by a direct application of Lemma \ref{general}.
\begin{lemma}\label{Bh-B}
For all $E\in\Th$ it holds
%and all $\ph,\qh \in \Vhnk(E)$ it holds
\begin{equation}\label{ah-a}
\begin{aligned}
a^{\E}_h(\p,\q) -&a^{\E}(\p,\q)\le \NErr_\E^{k-1}(\diffp \nabla\p)\NErr_\E^{k-1}(\nabla\q)
+\NErr_\E^{k-1}(\diffp\nabla \q)\NErr_\E^{k-1}(\nabla\p) \\
&+C_{\diffp}\NErr_\E^{k-1}(\nabla\p )\NErr_\E^{k-1}(\nabla\q) \\
&+ S^{\E}((\Id-\PNk)\p,(\Id-\PNk)\q)) \quad \forall \p,\q \in \Vhnk(E),
% \le \Op_{0,\E}^k(\nabla\p)\, \Op_{0,\E}^k(\nabla\q).
\end{aligned}
\end{equation}
%while for all $\p,\q \in H^1(E)$ we have
\begin{equation}\label{bh-b}
\begin{aligned}
b^{\E}_h(\p,\q) -&b^{\E}(\p,\q)\le \NErr_\E^{k-1}(\bb \cdot\nabla\q)\NErr_\E^{k-1}(\p)+
\NErr_\E^{k-1}(\nabla\q) \NErr_\E^{k-1}(\bb\p)\\
&+C_{\bb}\NErr_\E^{k-1}(\nabla\q )\NErr_\E^{k-1}(\p) \quad \forall \p,\q \in H^1(E),
% \le \Op_{0,\E}^k(\nabla\p)\, \Op_{0,\E}^k(\q),
\end{aligned}
\end{equation}
\begin{equation}\label{ch-c}
\begin{aligned}
c^{\E}_h(\p,\q) -&c^{\E}(\p,\q)\le \NErr_\E^{k-1}(\reaction \p)\NErr_\E^{k-1}(\q)+\NErr_\E^{k-1}(\reaction \q)\NErr_\E^{k-1}(\p)\\
&+C_{\reaction}\NErr_\E^{k-1}(\p )\NErr_\E^{k-1}(\q) \quad \forall \p,\q \in H^1(E).
% \le  \Op_{0,\E}^k(\p)\, \Op_{0,\E}^k(\q).
\end{aligned}
\end{equation}
\end{lemma}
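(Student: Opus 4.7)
The plan is to reduce each of the three estimates to a (nearly) mechanical application of Lemma \ref{general}, with the stabilization term in the $a$-case handled as a separate additive contribution. In each case the game is the same: write the difference $\ast_h^\E-\ast^\E$ as $(\mu\,\xi,\eta)_{0,\E}-(\mu\,\Pzkm\xi,\Pzkm\eta)_{0,\E}$ for appropriate choices of the smooth coefficient $\mu$ and of the entries $\xi,\eta$, then read off the bound from \eqref{gen1} with $k$ replaced by $k-1$.

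For \eqref{ah-a} I would take $\mu=\diffp$, $\xi=\nabla\p$, $\eta=\nabla\q$, treating $\Pzkm$ componentwise on vectors. Since the polynomial-consistency part of $a^\E_h$ is exactly $(\diffp\,\Pzkm\nabla\p,\Pzkm\nabla\q)_{0,\E}$, Lemma \ref{general} produces the first three terms of \eqref{ah-a}, and the stabilization $S^{\E}((\Id-\PNk)\p,(\Id-\PNk)\q)$ contributes the fourth term without further work. For \eqref{ch-c} the situation is even cleaner: take $\mu=\reaction$, $\xi=\p$, $\eta=\q$ (scalar entries, no stabilization), and Lemma \ref{general} gives the estimate directly.

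The only case requiring genuine attention is \eqref{bh-b}, because here a vector coefficient $\bb$ is sandwiched between the scalar $\p$ and the vector $\nabla\q$, so the difference
\begin{equation*}
b^{\E}(\p,\q)-b^{\E}_h(\p,\q)=-(\p,\bb\cdot\nabla\q)_{0,\E}+(\pLduem\p,\bb\cdot\Pzkm\nabla\q)_{0,\E}
\end{equation*}
is not literally in the form of the lemma. The plan is to redo the algebraic manipulation \eqref{gen1-bis} by hand: split as
\begin{equation*}
(\p,\bb\cdot\nabla\q)-(\pLduem\p,\bb\cdot\Pzkm\nabla\q)=(\p,\bb\cdot(\nabla\q-\Pzkm\nabla\q))+(\p-\pLduem\p,\bb\cdot\Pzkm\nabla\q),
\end{equation*}
and in each summand insert the projection of the appropriate smooth factor (namely $\bb\p$ in the first term and $\bb\cdot\nabla\q$ in the second), exploiting that $\nabla\q-\Pzkm\nabla\q$ and $\p-\pLduem\p$ are $L^2$-orthogonal to $\Pp_{k-1}$. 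The residual cross term $(\p-\pLduem\p,\bb\cdot(\nabla\q-\Pzkm\nabla\q))$ is controlled by $C_{\bb}\NErr^{k-1}_\E(\p)\NErr^{k-1}_\E(\nabla\q)$ with $C_{\bb}=\|\bb\|_\infty$, producing exactly the third contribution to \eqref{bh-b}.

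The main obstacle is not conceptual but notational: keeping the scalar/vector and left/right bookkeeping straight in the $b$-estimate, so that the three terms on the right of \eqref{bh-b} appear in the order written. Once the decomposition above is in place the Cauchy–Schwarz step is identical to the end of the proof of Lemma \ref{general}, and all three inequalities follow.
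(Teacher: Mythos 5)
Your proposal is correct and follows essentially the same route as the paper, which simply observes that Lemma \ref{Bh-B} ``follows immediately by a direct application of Lemma \ref{general}'' (with $\mu=\diffp,\bb,\reaction$ and the appropriate scalar/vector entries, the $S^{\E}$ term being carried along additively). Your explicit reworking of the decomposition \eqref{gen1-bis} for the convective term is a harmless elaboration of the same argument, since the paper's Lemma \ref{general} is already stated for scalar or vector-valued entries.
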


In the next Lemma we evaluate the consistency error.
%Contrary to the case of constant coefficients, in the presence of variable coefficients there is a consistency error that needs to be evaluated. We have the following fundamental lemma of $k-$ consistency.

\begin{lemma}[{\bf consistency}]
\label{lemma-cons}
For all $\p$ sufficiently regular and for all $\qh \in \Vhnk$ it holds
\begin{equation}\label{consistency-primal}
B^{\E}(\up,\qh) - B^{\E}_h(\up,\qh)
\le C_{\diffp,\trasp,\reaction} h^{k}_{\E}  \|\p\|_{k+1,\E}\|\qh\|_{1,\E} \quad \forall \E\in\Th.
\end{equation}
\end{lemma}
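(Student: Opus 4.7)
The plan is to split $B^\E(\up,\qh) - B^\E_h(\up,\qh)$ along its three pieces,
$$B^\E(\up,\qh) - B^\E_h(\up,\qh) = \bigl[a^\E(\up,\qh) - a^\E_h(\up,\qh)\bigr] + \bigl[b^\E(\up,\qh) - b^\E_h(\up,\qh)\bigr] + \bigl[c^\E(\up,\qh) - c^\E_h(\up,\qh)\bigr],$$
and apply Lemma \ref{Bh-B} to each piece with $\p = \up$ and $\q = \qh$. The right-hand sides in \eqref{ah-a}--\eqref{ch-c} were obtained from Lemma \ref{general} through Cauchy--Schwarz, so they in fact bound the absolute value of the difference, and controlling $B^\E - B^\E_h$ instead of $B^\E_h - B^\E$ is not an issue.

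The crucial observation is that $\up = \Pzk\p$ is a polynomial of degree $k$ on $\E$, which produces several cancellations: since $\PNk\up = \up$, the stabilization term $S^\E((\Id-\PNk)\up,(\Id-\PNk)\qh)$ in \eqref{ah-a} vanishes, and since $\nabla\up \in [\Pp_{k-1}(\E)]^2$, we have $\NErr_\E^{k-1}(\nabla\up) = 0$, killing the second and third summands on the right-hand side of \eqref{ah-a}. Thus the $a$-contribution collapses to the single term $\NErr_\E^{k-1}(\diffp\nabla\up)\,\NErr_\E^{k-1}(\nabla\qh)$, whereas in \eqref{bh-b}--\eqref{ch-c} no summand vanishes but each $\up$-dependent factor is easy to estimate directly.

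The remaining work is to bound each $\NErr_\E^{k-1}(\cdot)$ that survives. For the $\qh$-dependent factors I use the trivial bound $\NErr_\E^{k-1}(\varphi) \le \|\varphi\|_{0,\E}$, giving $\NErr_\E^{k-1}(\nabla\qh) \le |\qh|_{1,\E}$, $\NErr_\E^{k-1}(\bb\cdot\nabla\qh) \le b_{\max}|\qh|_{1,\E}$, and $\NErr_\E^{k-1}(\qh) \le \|\qh\|_{0,\E}$. For the $\up$-dependent factors, which must carry the $O(h_\E^k)$ smallness, I use the best-approximation property of $\Pi^0_{k-1}$: for any $r \in [\Pp_{k-1}(\E)]^2$,
$$\NErr_\E^{k-1}(\diffp\nabla\up) \le \|\diffp\nabla\up - r\|_{0,\E} \le \diffp_{\max}\|\nabla(\up-\p)\|_{0,\E} + \|\diffp\nabla\p - r\|_{0,\E}.$$
Choosing $r$ as the best $\Pp_{k-1}$-approximant of $\diffp\nabla\p$, the second summand is bounded by $C_\diffp h_\E^k \|\p\|_{k+1,\E}$ via a Bramble--Hilbert argument (using the smoothness of $\diffp$), while the first is bounded by $\diffp_{\max}|\p-\Pzk\p|_{1,\E} \le C_\diffp h_\E^k |\p|_{k+1,\E}$ thanks to the approximation lemma of Section \ref{sec:pri:est}. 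Entirely analogous arguments handle $\NErr_\E^{k-1}(\bb\up)$, $\NErr_\E^{k-1}(\reaction\up)$, and $\NErr_\E^{k-1}(\up)$ (this last one simply as $\|\Pzk\p - \Pzkm\p\|_{0,\E} \le C h_\E^k |\p|_{k,\E}$), each bounded by $C h_\E^k \|\p\|_{k+1,\E}$ with constants depending on the respective coefficients.

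Combining the three partial bounds yields the claimed inequality. No single step is genuinely hard; the main care goes into the Bramble--Hilbert bookkeeping in the previous paragraph, where one must be sure to extract the full $h_\E^k$ factor by approximating \emph{products} of smooth coefficients with derivatives of $\p$ (rather than each factor separately), and consistent with the regularity assumed on $\diffp$, $\bb$, $\reaction$.
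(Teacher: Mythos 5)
Your proposal is correct and follows essentially the same route as the paper: split $B^\E-B^\E_h$ into the $a$, $b$, $c$ contributions, apply Lemma \ref{Bh-B} with $\p=\up$ and $\q=\qh$, exploit that $\up\in\Pp_k(\E)$ to kill the stabilization term and the factor $\NErr_\E^{k-1}(\nabla\up)$, and bound the surviving oscillation factors by inserting $\diffp\nabla\p$ (resp.\ $\bb\p$, $\reaction\p$) and using polynomial approximation estimates. The only cosmetic difference is that the paper realizes your ``best $\Pp_{k-1}$-approximant'' concretely as $\Pzkm(\diffp\nabla\p)$, which is the same argument.
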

\begin{proof}
From the definition of $B^{\E}$ and $B^{\E}_h$ we have
\begin{equation}\label{X-3}
\begin{aligned}
B^{\E}(\up,\qh)-& B^{\E}_h(\up,\qh)=a^{\E}(\up,\qh)-a^{\E}_h(\up,\qh) \\
&+ b^{\E}(\up,\qh)-b^{\E}_h(\up,\qh) + c^{\E}(\up,\qh)-c^{\E}_h(\up,\qh).
\end{aligned}
\end{equation}
We first observe that when $\p\in \Pp_k(\E)$, then obviously we have $\Pzk \, \p \equiv \p$,
$\Pzkm \nabla \p \equiv \nabla \p$, and then by \eqref{4.15} the term containing $S^{\E}$ vanishes. Therefore, a direct application of \eqref{ah-a} implies
\begin{equation}\label{uffa4}
a^{\E}_h(\up,\qh) -a^{\E}(\up,\qh) \le \NErr_\E^{k-1}(\diffp \nabla \up)\,\NErr_\E^{k-1}(\nabla\qh),
\end{equation}
for all $\qh\in\Vhnk(E)$.
The first factor in the right-hand side of \eqref{uffa4} can be easily bounded by
\begin{equation}\label{X-1}
\begin{aligned}
\NErr_\E^{k-1}(\diffp \nabla \up) & = \|  \diffp \nabla \up - \Pzkm (\diffp \nabla \up) \|_{0,\E}
\le  \|  \diffp \nabla \up - \Pzkm (\diffp \nabla \p) \|_{0,\E} \\
& \le  \|  \diffp \nabla \up - \diffp \nabla \p \|_{0,\E} +
 \|  \diffp \nabla \p - \Pzkm (\diffp \nabla \p) \|_{0,\E} \\
%& \le  h^{k}_{\E} (C_{\diffp,\trasp,\reaction} |\p|_{k+1,\E}+|\diffp \nabla \p|_{k,\E}) .
& \le C\, h^{k}_{\E} ({\diffp_{\max}} |\p|_{k+1,\E}+|\diffp \nabla \p|_{k,\E})\le C_{\diffp} h^{k}_{\E}  \|\p\|_{k+1,\E},
\end{aligned}
\end{equation}
and the second factor can by simply bounded by $\|\qh\|_{1,\E}$.
Thus,
\begin{equation}\label{uffa4-bis}
a^{\E}_h(\up,\qh) -a^{\E}(\up,\qh) \le C_{\diffp} h^{k}_{\E}  \|\p\|_{k+1,\E} \|\qh\|_{1,\E}.
\end{equation}
With similar arguments we have, for instance,
\begin{equation}\label{XX-2}
\begin{aligned}
\NErr_\E^{k-1}(\trasp \up) &\le C( h^{k+1}_{\E} {\trasp_{\max}} |\p|_{k+1,\E}+h^k_{\E}|\trasp \p|_{k,\E})
\le C_{\trasp} h^{k}_{\E}  \|\p\|_{k+1,\E},\\
\NErr_\E^{k-1}(\reaction \up)& \le C( h^{k+1}_{\E} {\reaction_{\max}} |\p|_{k+1,\E}+h^k_{\E}|\reaction \p|_{k,\E})
\le C_{\reaction} h^{k}_{\E}  \|\p\|_{k+1,\E}.
\end{aligned}
\end{equation}
Consequently,
\begin{equation}\label{uffa4-ter}
\begin{aligned}
b^{\E}_h(\up,\qh) -b^{\E}(\up,\qh) &\le C_{\trasp} h^{k}_{\E}  \|\p\|_{k+1,\E} \|\qh\|_{1,\E},\\
c^{\E}_h(\up,\qh) -c^{\E}(\up,\qh) &\le C_{\reaction} h^{k}_{\E}  \|\p\|_{k+1,\E} \|\qh\|_{1,\E}.
\end{aligned}
\end{equation}
The proof follows by inserting \eqref{uffa4-bis} and \eqref{uffa4-ter} in \eqref{X-3}.

%
%Using \eqref{bh-b}, \eqref{ch-c} with $\p=\up$ and the above bound  we obtain:
%\begin{equation}\label{X-3}
%\begin{aligned}
%B^{\E}(\up,\qh)-& B^{\E}_h(\up,\qh)=a^{\E}(\up,\qh)-a^{\E}_h(\up,\qh) \\
%&+ b^{\E}(\up,\qh)-b^{\E}_h(\up,\qh) + c^{\E}(\up,\qh)-c^{\E}_h(\up,\qh)\\
%&\le T_1(\up) \cdot T_2(\qh) ,
%%
%\end{aligned}
%\end{equation}
%with
%$$
%\begin{aligned}
%& T_1(\up) = \NErr_\E^{k-1}(\diffp \nabla \up) + \NErr_\E^{k-1}(\trasp  \up)
%+ \NErr_\E^{k-1}(\reaction \up) + \NErr_\E^{k-1}(\up) , \\
%& T_2(\qh) = \NErr_\E^{k-1}(\nabla\qh) + \NErr_\E^{k-1}(\trasp\cdot\nabla\qh)
% + \NErr_\E^{k-1}(\reaction\qh) + \NErr_\E^{k-1}(\qh) .
%\end{aligned}
%$$
%Standard estimates easily give
%\begin{equation}\label{X-7}
%T_2(\qh) \le C_{\trasp,\reaction} \| \qh\|_{1,\E} .
%\end{equation}
%The first term appearing in the definition of $T_1(\up)$ can be easily bounded by
%\begin{equation}\label{X-1}
%\begin{aligned}
%\NErr_\E^{k-1}(\diffp \nabla \up) & = \|  \diffp \nabla \up - \Pzkm (\diffp \nabla \up) \|_{0,\E}
%\le  \|  \diffp \nabla \up - \Pzkm (\diffp \nabla \p) \|_{0,\E} \\
%& \le  \|  \diffp \nabla \up - \diffp \nabla \p \|_{0,\E} +
% \|  \diffp \nabla \p - \Pzkm (\diffp \nabla \p) \|_{0,\E} \\
%%& \le  h^{k}_{\E} (C_{\diffp,\trasp,\reaction} |\p|_{k+1,\E}+|\diffp \nabla \p|_{k,\E}) .
%& \le C\, h^{k}_{\E} ({\diffp_{\max}} |\p|_{k+1,\E}+|\diffp \nabla \p|_{k,\E})\le C_{\diffp} h^{k}_{\E}  \|\p\|_{k+1,\E}.
%\end{aligned}
%\end{equation}

%combining \eqref{X-3} with \eqref{X-7}, \eqref{X-1} and treating the remaining three terms in the definition of $T_1(\up)$ with the same steps used in \eqref{X-1}.
\end{proof}

\begin{remark}
We point out that \eqref{consistency-primal} holds for a generic $\qh\in \Vhnk$, for which only $H^1$ regularity can be used. If for instance $\qh=\q_I$, that is, $\qh$ is  the interpolate of a more regular function, \eqref{consistency-primal} can be improved. Indeed, looking e.g. at \eqref{uffa4} we would have
\begin{equation}\label{in-remark6}
\begin{aligned}
\NErr_\E^{k-1}(\nabla \q_I)&=\|\nabla \q_I-\Pzkm \nabla \q_I\|_{0,\E}
\le \|\nabla \q_I-\Pzkm \nabla \q\|_{0,\E}\\
&\le \|\nabla (\q_I- \q)\|_{0,\E}+\|\nabla \q-\Pzkm \nabla \q\|_{0,\E}\le C \, h \|\q\|_{2,\E},
\end{aligned}
\end{equation}
and in \eqref{consistency-primal} we would gain an extra power of $h$:
\begin{equation}\label{cons-interp}
B^{\E}(\up,\q_I) - B^{\E}_h(\up,\q_I) \le C_{\diffp,\trasp,\reaction} h^{k+1}_{\E}  \|\p\|_{k+1,\E}\|\q\|_{2,\E}.
\end{equation}
\end{remark}
Before going to study the error estimates for our problem, we have to prove a final technical Lemma.
\begin{lemma}\label{dual-per-ah}
For every $\qstar\in H^1_0(\Omega)$ there exists a $\qstarh\in \Vhnk$ such that
\begin{equation}\label{defwstar}
a_h(\qstarh,\q_h)=a(\qstar,\q_h)\quad\forall\,\q_h\in \Vhnk.
\end{equation}
Moreover, there exists a constant $C$, independent of $h$, such that
\begin{equation}\label{w-wstar}
h\|\qstar-\qstarh\|_{1,\Omega}+\|\qstar-\qstarh\|_{0,\Omega}\le C \,h\,\|\qstar\|_{1,\Omega}.
\end{equation}
\end{lemma}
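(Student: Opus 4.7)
My plan has two stages: first establish well-posedness and $H^1$ stability of $\qstarh$ via Lax--Milgram applied to $a_h$, and then obtain the $L^2$ estimate through an Aubin--Nitsche duality argument. For the first stage I will show that $a_h$ is coercive on $\Vhnk$. Since $\nabla\PNk\qh\in[\Pp_{k-1}(\E)]^2$, one has $\Pzkm\nabla\PNk\qh=\nabla\PNk\qh$, so
$$\nabla\PNk\qh=\Pzkm\nabla\qh-\Pzkm\nabla(\Id-\PNk)\qh,$$
and the triangle inequality together with $|\qh|_{1,\E}\le|\PNk\qh|_{1,\E}+|(\Id-\PNk)\qh|_{1,\E}$ yields
$$|\qh|_{1,\E}^2\le C\bigl(\|\Pzkm\nabla\qh\|_{0,\E}^2+|(\Id-\PNk)\qh|_{1,\E}^2\bigr).$$
Combined with $\diffp\ge\diffp_0>0$ and the lower stabilization bound on $S^{\E}$, this gives $a_h^{\E}(\qh,\qh)\ge C|\qh|_{1,\E}^2$. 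Summing and using Poincar\'e (since $\Vhnk\subset H^1_0(\Omega)$) yields coercivity $a_h(\qh,\qh)\ge C_a\|\qh\|_{1,\Omega}^2$. Continuity \eqref{cont-ah} together with boundedness of $\qh\mapsto a(\qstar,\qh)$ on $H^1_0(\Omega)$ provides, by Lax--Milgram, a unique $\qstarh\in\Vhnk$ satisfying \eqref{defwstar}, and testing with $\qh=\qstarh$ yields $\|\qstarh\|_{1,\Omega}\le C\|\qstar\|_{1,\Omega}$, which handles the $h\|\qstar-\qstarh\|_{1,\Omega}$ term in \eqref{w-wstar}.

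For the $L^2$ bound, exploiting the symmetry of $a$, I introduce $\xi\in H^1_0(\Omega)$ solving $a(v,\xi)=(\qstar-\qstarh,v)$ for every $v\in H^1_0(\Omega)$. Convexity of $\Omega$ gives $\xi\in H^2(\Omega)$ with $\|\xi\|_{2,\Omega}\le C\|\qstar-\qstarh\|_{0,\Omega}$. Let $\xi_I\in\Vhnk$ be its interpolant. Then
$$\|\qstar-\qstarh\|_{0,\Omega}^2=a(\xi-\xi_I,\qstar-\qstarh)+a(\xi_I,\qstar-\qstarh).$$
The first summand is controlled by $\diffp_{\max}|\xi-\xi_I|_{1,\Omega}\|\qstar-\qstarh\|_{1,\Omega}\le Ch|\xi|_{2,\Omega}\|\qstar\|_{1,\Omega}$ by the interpolation lemma and Stage~1. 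For the second, symmetry of $a$ combined with \eqref{defwstar} gives $a(\xi_I,\qstar)=a(\qstar,\xi_I)=a_h(\qstarh,\xi_I)$, whence
$$a(\xi_I,\qstar-\qstarh)=(a_h-a)(\qstarh,\xi_I).$$

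The main obstacle is then bounding this consistency-type residual. I will apply Lemma \ref{Bh-B} (inequality \eqref{ah-a}) term by term. The factors involving $\xi_I$ will all be shown to be $O(h|\xi|_{2,\E})$: specifically $\NErr_\E^{k-1}(\nabla\xi_I)\le Ch|\xi|_{2,\E}$ and $\NErr_\E^{k-1}(\diffp\nabla\xi_I)\le C_\diffp h|\xi|_{2,\E}$, obtained by inserting $\pm\nabla\xi$ (respectively $\pm\,\diffp\nabla\xi$) into the projection error and applying the interpolation and polynomial-approximation estimates. The factors in $\qstarh$ admit the trivial bounds $\NErr_\E^{k-1}(\nabla\qstarh)\le|\qstarh|_{1,\E}$ and $\NErr_\E^{k-1}(\diffp\nabla\qstarh)\le\diffp_{\max}|\qstarh|_{1,\E}$. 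The stabilization remainder $S^{\E}((\Id-\PNk)\qstarh,(\Id-\PNk)\xi_I)$ is handled via \eqref{cont-S}, using $|(\Id-\PNk)\qstarh|_{1,\E}\le|\qstarh|_{1,\E}$ from \eqref{boundPN1} and the estimate $|(\Id-\PNk)\xi_I|_{1,\E}\le Ch|\xi|_{2,\E}$, obtained by inserting $\pm\xi$ and combining \eqref{boundPN} with the interpolation lemma. Summing over $\E\in\Th$ and using Stage~1 yields $(a_h-a)(\qstarh,\xi_I)\le Ch\|\qstar\|_{1,\Omega}|\xi|_{2,\Omega}$. Substituting $\|\xi\|_{2,\Omega}\le C\|\qstar-\qstarh\|_{0,\Omega}$ closes the estimate and delivers the required $\|\qstar-\qstarh\|_{0,\Omega}\le Ch\|\qstar\|_{1,\Omega}$.
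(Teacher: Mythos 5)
Your proposal is correct and follows essentially the same route as the paper: coercivity of $a_h$ on $\Vhnk$ (the paper gets it via $\|(\Id-\Pzkm)\nabla\q\|_{0,\E}\le\|\nabla(\q-\PNk\q)\|_{0,\E}$ and the orthogonal splitting of $\|\nabla\q\|_{0,\E}^2$, while you route through $\nabla\PNk\qh=\Pzkm\nabla\qh-\Pzkm\nabla(\Id-\PNk)\qh$ — an equivalent computation), followed by the same Aubin--Nitsche duality with the identity $\|\qstar-\qstarh\|_0^2=a(\qstar-\qstarh,\psi-\psiI)+(a_h-a)(\qstarh,\psiI)$ and the same term-by-term bounds from Lemma \ref{Bh-B}, including the $O(h)$ control of the stabilization remainder. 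No gaps.
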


\begin{proof} We first remark that, by definition of projection,
we have
\begin{equation}
\|\nabla \q - \Pzkm \nabla \q\|_{0,\E}\le \|\nabla \q - \nabla \PNk \q\|_{0,\E},
\end{equation}
since $\nabla\PNk \q $ is a (vector) polynomial of degree $\le k-1$.
Hence, for $\q \in \Vhnk$ and for every integer $k\ge 1$:
\begin{equation}\label{ellipt-ah}
a_h(\q,\q)\ge C\,
\sum_{\E}\Big(\|\Pzkm \nabla \q\|_{0,\E}^2 + \| (I-\Pzkm)\nabla \q\|_{0,\E}^2)\Big) \ge C |\q|_1^2,
\end{equation}
and this immediately implies that \eqref{defwstar} has a unique solution, and that, using \eqref{cont-ah},
we also have $\|\qstarh\|_1\le C\,\|\qstar\|_1$. In order to show the second part of \eqref{w-wstar}
we shall use duality arguments.
Let $\psi\in H^2(\Omega)\cap H^1_0(\Omega)$ be the solution of
\begin{equation}\label{5.24}
a(\q,\psi)=(\qstar-\qstarh,\q)_{0,\Omega}\quad\forall\q\in H^1_0(\Omega),
\end{equation}
and let $\psiI \in \Vhnk$ be its interpolant, for which it holds
\begin{equation}\label{X-2}
\|\psi-\psiI\|_1 \le C h |\psi|_2 \le C\, h\, \|\qstar-\qstarh\|_0.
\end{equation}
We have easily that, for every $k\ge 0$ (and obvious notation for $\NErr^k$)
$$\NErr^k(\nabla\psiI)\le\NErr^0(\nabla\psiI)\le \NErr^0(\nabla(\psiI-\psi))+\NErr^0(\nabla\psi)
\le\, C\, h \|\psi\|_2 \le\,  C\, h \|\qstar-\qstarh\|_0 ,$$
and similarly
$$
\NErr^k(\diffp\nabla\psiI)\le\NErr^0(\diffp\nabla\psiI)\le\, C_{\diffp}\, h \|\qstar-\qstarh\|_0.
$$
By recalling \eqref{cont-S} and the definition of the projectors, then using standard approximation estimates, we easily get
\begin{equation}\label{stima-S}
\begin{aligned}
S^{\E}((\Id-\PNk)\qstarh,(\Id-\PNk)\psiI)) &
\le \alpha^*\diffp_{\max} \|\nabla \qstarh - \nabla \PNk \qstarh\|_{0,\E}\|\nabla \psiI-\nabla \PNk\psiI \|_{0,\E} \\
& \le \alpha^*\diffp_{\max} |\qstarh|_{1,\E}  \|\nabla \psiI-\nabla \PNk\psi \|_{0,\E} \\
& \le \alpha^*\diffp_{\max} |\qstarh|_{1,\E} ( \|\nabla (\psiI- \psi) \|_{0,\E} + \|\nabla (\psi -  \PNk\psi )\|_{0,\E} ) \\
& \le C \, h_{\E}\, |\qstar|_{1,\E} |\psi|_{2,\E}.
%& \le C \, h_{\E}\, |\qstar|_{1,\E} \|\qstar-\qstarh\|_{0,\E}.
\end{aligned}
\end{equation}
Summation on the elements and \eqref{X-2} give
\begin{equation}\label{5.26bis}
\sum_{\E\in\Th} S^{\E}((\Id-\PNk)\qstarh,(\Id-\PNk)\psiI))\le C \, h\, |\qstar|_{1} \|\qstar-\qstarh\|_{0}.
\end{equation}
On the other hand, both $\NErr^k(\nabla\qstarh)$ and $\NErr^k(\diffp\nabla\qstarh)$ are
just bounded by, say, $C_{\diffp}\|\qstar\|_1$.
Then, using \eqref{5.24}, \eqref{defwstar}, and \eqref{ah-a} (with \eqref{5.26bis} and \eqref{in-remark6}) we obtain
\begin{equation}
\begin{aligned}
\|\qstar-\qstarh\|^2_0& = a(\qstar-\qstarh,\psi)
= a(\qstar-\qstarh,\psi-\psiI)+a(\qstar-\qstarh,\psiI)\\
&= a(\qstar-\qstarh,\psi-\psiI)+a_h(\qstarh,\psiI)-a(\qstarh,\psiI)\\
&\le C_{\diffp}\,\|\qstar-\qstarh\|_1\,\|\psi-\psiI\|_1+C_{\diffp}\|\qstar\|_1\,h\,\|\qstar-\qstarh\|_0,
\end{aligned}
\end{equation}
and the result follows.
\end{proof}

% ----------------------------------------------------------------------------
\subsection{$H^1$ Estimate}
% ----------------------------------------------------------------------------

We have the following discrete stability lemma.

\begin{lemma}\label{ellipt-primal}
The bilinear form $B_h(\cdot,\cdot)$  satisfies
the following condition (discrete counterpart of \eqref{ellipt-B}): there exists an $h_0>0$ and
a constant $\overline{C}_B$ such that, for all $h<h_0$:
\begin{equation}\label{ellipt-Bh}
 \sup_{\q_h\in \Vhnk} \frac{B_h(\p_h,\q_h)}{\|\q_h\|_1}\ge \overline{C}_B \|\p_h\|_1 \quad\forall\, \p_h \in \Vhnk.
\end{equation}
\end{lemma}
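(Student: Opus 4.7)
The plan is to transfer the continuous inf-sup \eqref{ellipt-B} to the discrete setting by means of the supremizer construction provided by Lemma \ref{dual-per-ah}. Given a nontrivial $\p_h\in\Vhnk$, I would first use \eqref{ellipt-B} to pick $\qstar\in H^1_0(\Omega)$ with $\|\qstar\|_1=1$ and $B(\p_h,\qstar)\ge (C_B/2)\|\p_h\|_1$, and then apply Lemma \ref{dual-per-ah} to obtain the discrete surrogate $\qstarh\in\Vhnk$ satisfying $a_h(\qstarh,\q_h)=a(\qstar,\q_h)$ for every $\q_h\in\Vhnk$, together with $\|\qstarh\|_1\le C$ and, by \eqref{w-wstar}, $\|\qstar-\qstarh\|_0\le Ch$. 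The discrete test function witnessing \eqref{ellipt-Bh} will be $\qstarh$ itself.

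The next step is to unfold $B_h(\p_h,\qstarh)-B(\p_h,\qstar)$. Since $a$ and $a_h$ are both symmetric, one has $a_h(\p_h,\qstarh)=a_h(\qstarh,\p_h)=a(\qstar,\p_h)=a(\p_h,\qstar)$, so the $a$-contributions cancel and
$$
B_h(\p_h,\qstarh)-B(\p_h,\qstar)=\bigl[b_h(\p_h,\qstarh)-b(\p_h,\qstar)\bigr]+\bigl[c_h(\p_h,\qstarh)-c(\p_h,\qstar)\bigr].
$$
I would then decompose each bracket as $(b_h-b)(\p_h,\qstarh)+b(\p_h,\qstarh-\qstar)$, and analogously for $c$. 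The "variation" pieces $b(\p_h,\qstarh-\qstar)$ and $c(\p_h,\qstarh-\qstar)$ are controlled using the $L^2$-smallness $\|\qstarh-\qstar\|_0\le Ch$: immediate by Cauchy--Schwarz for $c$, and after an integration by parts that moves $\nabla$ off $\qstarh-\qstar$ onto $\p_h$ for $b$ (legitimate since $\qstarh-\qstar\in H^1_0(\Omega)$), each yielding $\le Ch\|\p_h\|_1$.

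The main obstacle will be bounding the consistency errors $(b_h-b)(\p_h,\qstarh)$ and $(c_h-c)(\p_h,\qstarh)$ via \eqref{bh-b}--\eqref{ch-c} of Lemma \ref{Bh-B}, since neither $\p_h$ nor $\qstarh$ is smoother than $H^1$ and a naive bound would give no decay in $h$. The crucial observation is that the $\p_h$-side factors appearing in \eqref{bh-b}--\eqref{ch-c}, namely $\NErr^{k-1}_\E(\p_h)$, $\NErr^{k-1}_\E(\bb\p_h)$ and $\NErr^{k-1}_\E(\reaction\p_h)$, all carry a full $h_\E$ by the Poincar\'e--Wirtinger estimate for the $L^2$-projection onto $\Pp_{k-1}\supseteq\Pp_0$ (recall the shape regularity of Remark \ref{rem:mesh}), while the $\qstarh$-side factors are merely bounded by $\|\qstarh\|_{1,\E}$; summing on elements and applying Cauchy--Schwarz then gives $\le Ch\|\p_h\|_1\|\qstarh\|_1\le Ch\|\p_h\|_1$. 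Collecting all contributions one arrives at $B_h(\p_h,\qstarh)\ge (C_B/2-C_1 h)\|\p_h\|_1$; fixing $h_0$ so small that $C_1 h_0\le C_B/4$ and dividing by $\|\qstarh\|_1\le C$ yields \eqref{ellipt-Bh} with, for instance, $\overline{C}_B:=C_B/(4C)$ for every $h\le h_0$.
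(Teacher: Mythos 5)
Your proposal is correct and follows essentially the same route as the paper: the same supremizer $\qstarh$ from Lemma \ref{dual-per-ah}, the cancellation $a_h(\ph,\qstarh)=a(\ph,\qstar)$ by symmetry, the splitting of the $b$- and $c$-mismatches into a consistency part (handled via Lemma \ref{Bh-B}, with the $\ph$-side projection errors supplying the factor $h$) and a variation part (handled via $\|\qstar-\qstarh\|_0\le Ch$, with the same integration by parts for $b$), and the same final choice of $h_0$. The only differences are cosmetic (normalizing $\|\qstar\|_1=1$, and using $c$ rather than $c_h$ on the term $c(\ph,\qstarh-\qstar)$).
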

\begin{proof}
In order to prove  \eqref{ellipt-Bh} we follow Schatz \cite{Schatz74}.
For $\ph \in \Vhnk$, from \eqref{ellipt-B} we have
\begin{equation}
\exists \qstar\in H^1_0(\Omega) \mbox{ such that } \frac{B(\ph,\qstar)}{\|\qstar\|_1} \ge C_B \|\ph\|_1.
\end{equation}
Thanks to Lemma \ref{dual-per-ah}, the problem
\begin{equation}
\mbox{Find~} \qstarh\in \Vhnk \mbox{ such that } a_h(\qstarh,v_h)=a(\qstar,v_h)\quad \forall v_h\in \Vhnk
\end{equation}
has a unique solution, that satisfies
\begin{equation}\label{stima-Schatz}
\|\qstarh\|_1 \le C \|\qstar\|_1,\quad \mbox{and } \quad \|\qstar-\qstarh\|_{0,\Omega} \le C\,h \|\qstar\|_1.
\end{equation}
Then,
\begin{equation}
\begin{aligned}
B_h(\ph,\qstarh)&=a_h(\ph,\qstarh) + b_h(\ph,\qstarh) + c_h(\ph,\qstarh)\\
&=a(\ph,\qstar)+ b_h(\ph,\qstarh) - b(\ph,\qstar)+ c_h(\ph,\qstarh)  - c(\ph,\qstar) \\
&+ b(\ph,\qstar) + c(\ph,\qstar) \\
&=B(\ph,\qstar)+ b_h(\ph,\qstarh) - b(\ph,\qstar)+ c_h(\ph,\qstarh)  - c(\ph,\qstar)\\
&=B(\ph,\qstar)+ b(\ph,\qstarh-\qstar) + b_h(\ph,\qstarh) - b(\ph,\qstarh)\\
&+ c_h(\ph,\qstarh-\qstar) + c_h(\ph,\qstar)  - c(\ph,\qstar).
\end{aligned}
\end{equation}
From \eqref{stima-Schatz} and \eqref{cont-bhch} we have
\begin{equation}
c_h(\ph,\qstarh-\qstar) \le \reaction_{\max} h \|\ph\|_0 \|\qstar\|_1,
\end{equation}
while an integration by parts and again \eqref{stima-Schatz} yield
\begin{equation}
\begin{aligned}
b(\ph,\qstarh-\qstar) & = -\int_\Omega \ph \trasp\cdot\nabla (\qstarh-\qstar) \,{\rm d}x
=  \int_\Omega {\rm div} (\trasp\ph) \, (\qstarh-\qstar) \,{\rm d}x \\
& \le \| {\rm div} (\trasp\ph) \|_{0} \,h \| \qstar \|_1 \le C_\trasp^\dagger \| \ph \|_1 \, h \| \qstar \|_1 .
\end{aligned}
\end{equation}
Moreover from \eqref{bh-b} with $\p=\ph,~\q=\qstarh$
\begin{equation}
\begin{aligned}
b_h(\ph,\qstarh) - b(\ph,\qstarh)&\le \NErr^{k-1}(\bb \cdot\nabla\qstarh)\,\NErr^{k-1}(\ph)+\NErr^{k-1}( \nabla \qstarh)\;\NErr^{k-1}(\bb\ph)\\
&+C_{\bb}\NErr^{k-1}(\nabla\qstarh)\,\NErr^{k-1}(\ph)\\
&=\|\bb \cdot\nabla\qstarh-\pLduem (\bb\cdot\nabla \qstarh)\|_0\, \|\ph-\pLduem \ph\|_0\\
&+\|\nabla\qstarh-\Pzkm \nabla \qstarh\|_0 \|\bb\ph-\Pzkm(\bb \ph)\|_0\\
&+C_{\bb}\|\nabla\qstarh -\Pzkm \nabla\qstarh\|_0 \|\ph-\pLduem\ph\|_0\\
&\le \|\bb \cdot\nabla\qstarh\|_0\,C\, h \,|\ph|_1+|\qstarh|_1\, C\,h\, |\bb\ph |_1 +C_{\bb} |\qstarh|_1\,C\, h
|\ph|_1\\
&\le C_{\trasp}^*\,h \|\ph\|_1\|\qstar\|_1.
\end{aligned}
\end{equation}
Similarly, from \eqref{ch-c} we deduce
\begin{equation}
c_h(\ph,\qstar)  - c(\ph,\qstar) \le C_{\reaction}^*\,h \|\ph\|_0\|\qstar\|_1 \le C_{\reaction}^*\,h \|\ph\|_1\|\qstar\|_1 .
\end{equation}
Choosing then $h_0:=\frac{C_B}{2(C_{\trasp}^*+ C_{\reaction}^*+C_\trasp^\dagger+\reaction_{\max})}$
we obviously have for $h\le h_0$,
\begin{equation}\label{per-infsup}
(C_{\trasp}^*+ C_{\reaction}^*+C_\trasp^\dagger+\reaction_{\max})\,h \le \frac{C_B}{2}.
\end{equation}
Hence, for $h \le h_0$,
\begin{equation}
B_h(\ph,\qstarh)\ge  \frac{C_B}{2} \|\ph\|_1 \|\qstarh\|_1,
\end{equation}
and the proof is concluded.
\end{proof}

\begin{remark}
Clearly, if $\trasp=0,$ and $\reaction=0$, \eqref{per-infsup} holds for any $h$ (and, indeed, we are back
at the situation of Lemma  \ref{dual-per-ah}).
\end{remark}

We are now ready to prove the following Theorem.

\begin{theorem}\label{risultato-primal}
For $h$ sufficiently small,
problem \eqref{vem-approx-n} has a unique solution $\ph\in \Vhnk$, and the following error estimate holds:
\begin{equation}\label{stima-finale-primal}
\|\p-\ph\|_1 \le C h^{k}\,(\|\p\|_{k+1}+|f|_{k}),
\end{equation}
with $C$ a constant depending on $\diffp, \vbeta,$ and $\reaction$ but independent of $h$.
\end{theorem}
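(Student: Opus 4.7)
\textbf{Plan for the proof of Theorem \ref{risultato-primal}.}

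The plan is to derive the error bound via a standard Strang-type argument built on the discrete inf-sup condition of Lemma \ref{ellipt-primal}. First, existence and uniqueness of $\ph$ for $h<h_0$ follow immediately, because \eqref{vem-approx-n} is a square linear system in the finite-dimensional space $\Vhnk$ and \eqref{ellipt-Bh} forces the associated operator to be injective, hence bijective. So the work is entirely in the error estimate.

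To estimate the error, I would introduce the interpolant $\pI\in\Vhnk$ and set $\delta_h:=\ph-\pI$. Applying \eqref{ellipt-Bh} and using $B_h(\ph,\q_h)=(f_h,\q_h)$, one has
\begin{equation*}
\overline{C}_B\|\delta_h\|_1\le \sup_{\q_h\in\Vhnk}\frac{(f_h,\q_h)-B_h(\pI,\q_h)}{\|\q_h\|_1}.
\end{equation*}
The key is then to bound the numerator. To do so cleanly, I would introduce a piecewise polynomial approximant $\p_\pi$ of $\p$ (element by element of degree $k$, e.g.\ the local $L^2$-projection), add and subtract, and use $(f,\q_h)=B(\p,\q_h)$ to write
\begin{equation*}
(f_h,\q_h)-B_h(\pI,\q_h)=\bigl[(f_h,\q_h)-(f,\q_h)\bigr]+B(\p-\p_\pi,\q_h)+\bigl[B(\p_\pi,\q_h)-B_h(\p_\pi,\q_h)\bigr]+B_h(\p_\pi-\pI,\q_h).
\end{equation*}

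Now each of the four pieces can be controlled by a standard estimate. The loading discrepancy, using orthogonality of $\Pi^0_{k-1}$ and the approximation lemma, is $O(h^{k+1}|f|_k\,|\q_h|_1)$. The term $B(\p-\p_\pi,\q_h)$ is handled by continuity \eqref{cont-B} together with the estimate $\|\p-\p_\pi\|_1\le C h^k|\p|_{k+1}$. The consistency-type term $B(\p_\pi,\q_h)-B_h(\p_\pi,\q_h)$ is estimated elementwise by Lemma \ref{lemma-cons}, since $\p_\pi|_\E\in\Pp_k(\E)$ is trivially ``sufficiently regular'' and $\|\p_\pi\|_{k+1,\E}=\|\p_\pi\|_{k,\E}\le C\|\p\|_{k+1,\E}$; this gives $O(h^k\|\p\|_{k+1}\|\q_h\|_1)$. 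Finally, the term $B_h(\p_\pi-\pI,\q_h)$ is treated elementwise: on each $\E$ both $\p_\pi|_\E$ and $\pI|_\E$ lie in $\Vhnk(\E)$, so the local continuity from Lemma \ref{cont-primal} applies and yields $C\|\p_\pi-\pI\|_{1,\E}\|\q_h\|_{1,\E}\le Ch^k\|\p\|_{k+1,\E}\|\q_h\|_{1,\E}$ by the triangle inequality together with the interpolation and polynomial approximation estimates. Summing these bounds and invoking \eqref{ellipt-Bh} gives $\|\delta_h\|_1\le Ch^k(\|\p\|_{k+1}+|f|_k)$, and then a triangle inequality with $\|\p-\pI\|_1\le Ch^k|\p|_{k+1}$ yields \eqref{stima-finale-primal}.

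The main technical obstacle, as already anticipated in the excerpt, is the proper bookkeeping in the consistency step: Lemma \ref{lemma-cons} really needs a smooth (typically polynomial) first argument in order that $\Pi^0_{k-1}\nabla\p\equiv\nabla\p$ and the stabilisation $S^\E$ drops out, whereas the continuity estimate of Lemma \ref{cont-primal} is only available on $\Vhnk\times\Vhnk$. The introduction of $\p_\pi$ (piecewise in $\Pp_k\subset\Vhnk$ on each element) is precisely what lets one apply Lemma \ref{lemma-cons} to $\p_\pi$ and Lemma \ref{cont-primal} to $\p_\pi-\pI$, thereby avoiding any attempt to plug a non-VEM function into $a_h^\E$. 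Everything else is a routine combination of the approximation estimates already collected before the statement of the theorem.
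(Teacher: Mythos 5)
Your proposal is correct and follows essentially the same route as the paper: the paper also starts from the discrete inf-sup condition of Lemma \ref{ellipt-primal} applied to $\ph-\pI$, inserts the piecewise polynomial $\up=\Pzk\p$ (your $\p_\pi$), and splits the numerator into exactly the same four pieces --- load consistency $(f_h-f,\cdot)$, $B_h(\up-\pI,\cdot)$ handled by discrete continuity, the consistency term $B(\up,\cdot)-B_h(\up,\cdot)$ handled by Lemma \ref{lemma-cons}, and $B(\p-\up,\cdot)$ handled by continuity of $B$ --- before concluding with the triangle inequality. Your remark that the local continuity of $B_h$ applies to $\up-\pI$ because $\Pp_k(\E)\subseteq\Vhnk(\E)$ is exactly the point the paper uses implicitly.
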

\begin{proof}
The existence and uniqueness of the solution of problem \eqref{vem-approx-n}, for $h$ small, is a consequence of Lemma \ref{ellipt-primal}. To prove the estimate \eqref{stima-finale-primal},
using \eqref{ellipt-Bh} we have that  for $h\le h_0$ there exists a $\qdiff\in \Vhnk$ verifying
\begin{equation}
\frac{B(\diffh,\qdiff)}{\|\qdiff\|_1} \ge \overline{C}_B \|\diffh\|_1.
\end{equation}
%Let $\up$ be an approximation of $\p$ which is piecewise in $\Pp_k$, for instance $\up=\pLdue\p$.
Recalling that $B_h(\ph,\qdiff)=(f_h,\qdiff)$, and $B(\p,\qdiff)=(f,\qdiff)$, adding and subtracting $\pLdue \p$ some simple algebra yields:
\begin{equation}\label{X-4}
\begin{aligned}
{\overline C}_{B} \|\diffh\|_1 \|\qdiff\|_1 &\le B_h(\diffh,\qdiff)=B_h(\ph,\qdiff)-B_h(\pI,\qdiff)\\
& = (f_h,\qdiff) + B_h(\up-\pI,\qdiff) - B_h(\up,\qdiff) + B(\up,\qdiff) \\
&+ B(p-\up,\qdiff) - B(p,\qdiff) \\
& = (f_h-f,\qdiff) + B_h(\up-\pI,\qdiff) + \big( B(\up,\qdiff) - B_h(\up,\qdiff) \big)\\
& + B(p-\up,\qdiff) .
\end{aligned}
\end{equation}
The first term in the right hand side of \eqref{X-4} is bounded by the Cauchy-Schwarz inequality and standard approximation estimates on the load $f$. The second and fourth term are bounded similarly using the continuity of $B_h$ and $B$ combined with approximation estimates for $p$. Finally, the third term is bounded using Lemma \ref{lemma-cons} on each element $\E$.
We get
%{\color{red} toglierei questa e sostituirei con quella sotto
%$$
%{\overline C}_{B} \|\diffh\|_1 \|\qdiff\|_1   \le
% C\,h^k \Big(|\p|_{k+1}+ |\diffp \nabla \p|_k+ |\trasp\cdot \nabla \p|_k+ |\reaction \p|_{k}+|f|_{k}\Big)\|\qdiff\|_1,
%$$
%}
$$
{\overline C}_{B} \|\diffh\|_1 \|\qdiff\|_1   \le C\, h^k \Big(
 C_{\diffp,\trasp,\reaction}\, \|\p\|_{k+1}+|f|_{k}\Big)\|\qdiff\|_1,
$$
and the proof is concluded.
\end{proof}

\begin{remark}
It is immediate to check that, by the same proof, also the following refined result holds:
$$
\|\p-\ph\|_1 \le C \Big( \sum_{\E \in \Th} h_\E^{2k} \, ( \|\p\|_{k+1,\E}^2+|f|_{k,\E}^2 ) \Big)^{1/2} .
$$
\end{remark}

%------------------------------------------------------------------------------
\subsection{$L^2$ estimate }
%------------------------------------------------------------------------------

We have the following result.

\begin{theorem}\label{stima-L2-primal}
For $h$ sufficiently small, the following error estimate holds:
\begin{equation}\label{stima-finale-primal-0}
\|\p-\ph\|_0 \le C h^{k+1}\,(\|\p\|_{k+1}+|f|_{k}),
\end{equation}
where $C$ is a constant depending on $\diffp, \vbeta,$ and $\reaction$ but independent of $h$.
\end{theorem}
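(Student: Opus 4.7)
The natural route is an Aubin--Nitsche duality argument, adapted to the Virtual Element setting. Let $\varphi \in H^2(\Omega)\cap H^1_0(\Omega)$ solve the adjoint problem $\Amfs \varphi = \p - \ph$; by \eqref{stimadj} we have $\|\varphi\|_{2} \le C^* \|\p - \ph\|_0$. Integration by parts gives the identity
\begin{equation*}
\|\p-\ph\|_0^2 = (\Amfs\varphi,\p-\ph) = B(\p-\ph,\varphi),
\end{equation*}
and the plan is to bound the right-hand side by $C\,h^{k+1}(\|\p\|_{k+1}+|f|_k)\|\varphi\|_2$; dividing by $\|\p-\ph\|_0$ then yields \eqref{stima-finale-primal-0}.

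Inserting the interpolant $\varphi_I\in\Vhnk$, I would split
\begin{equation*}
B(\p-\ph,\varphi) = B(\p-\ph,\varphi-\varphi_I) + B(\p-\ph,\varphi_I).
\end{equation*}
The first summand is controlled by the continuity \eqref{cont-B}, the approximation bound $\|\varphi-\varphi_I\|_1\le C h\|\varphi\|_2$, and the $H^1$ estimate \eqref{stima-finale-primal}, giving at once a bound of the form $C h^{k+1}(\|\p\|_{k+1}+|f|_k)\|\varphi\|_2$. For the second summand I exploit $B(\p,\varphi_I)=(f,\varphi_I)$ and $B_h(\ph,\varphi_I)=(f_h,\varphi_I)$ to rewrite
\begin{equation*}
B(\p-\ph,\varphi_I) = (f-f_h,\varphi_I) + \bigl[B_h(\ph,\varphi_I) - B(\ph,\varphi_I)\bigr].
\end{equation*}
The load-term $(f-f_h,\varphi_I)$ is treated element-wise by inserting $\Pzkm$ on both factors and using orthogonality, yielding $|f-\Pzkm f|_{0,\E}\le C h^k |f|_{k,\E}$ against $\|\varphi_I-\Pzkm\varphi_I\|_{0,\E}\le C h\|\varphi\|_{2,\E}$.

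The heart of the argument, and the main technical obstacle, is estimating $B_h(\ph,\varphi_I)-B(\ph,\varphi_I)$: unlike in the consistency Lemma \ref{lemma-cons}, the first entry is the discrete solution $\ph$, not a smooth function. The idea is to apply Lemma \ref{Bh-B} with $p=\ph$ and $q=\varphi_I$, and to show that each factor $\NErr_\E^{k-1}(\diffp\nabla\ph)$, $\NErr_\E^{k-1}(\trasp\ph)$, $\NErr_\E^{k-1}(\reaction\ph)$, as well as the stabilization contribution $S^\E((I-\PNk)\ph,(I-\PNk)\varphi_I)$, is of order $h^k$ when paired with the corresponding factor $\NErr_\E^{k-1}(\cdot\,\varphi_I)$ which, by the calculation \eqref{in-remark6} in Remark~5.4, is of order $h\|\varphi\|_{2,\E}$. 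To bound e.g.~$\NErr_\E^{k-1}(\nabla\ph)$ by $C h^k\|\p\|_{k+1,\E}+O(h^k)$ I split $\ph = \pI + (\ph-\pI)$: the first piece is $O(h^k)$ by standard interpolation estimates applied to $\p$, and the second is controlled in $H^1$-norm by Theorem \ref{risultato-primal}. The same splitting handles the stabilization term, using the $H^1$-boundedness \eqref{boundPN} of $\PNk$ together with the approximation estimate for $\p-\PNk\p$. Combining these bounds gives $|B_h(\ph,\varphi_I)-B(\ph,\varphi_I)|\le C h^{k+1}(\|\p\|_{k+1}+|f|_k)\|\varphi\|_2$, and inserting everything back into the duality identity completes the proof.
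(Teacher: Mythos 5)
Your proposal is correct and follows essentially the same route as the paper: an Aubin--Nitsche duality argument with the identical decomposition of $B(\p-\ph,\varphi)$, the same treatment of the load term via $\Pzkm$-orthogonality, and the same key idea of splitting $\ph$ into a smooth/polynomial part plus an $O(h^k)$ error part to control $B_h(\ph,\varphi_I)-B(\ph,\varphi_I)$ through Lemma \ref{Bh-B}. The only (immaterial) difference is that the paper inserts $\Pzk\p$ at the level of the bilinear forms, reusing the improved consistency bound \eqref{cons-interp} for the polynomial piece, whereas you insert $\p_I$ inside each projection-error factor $\NErr_\E^{k-1}(\cdot)$.
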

\begin{proof}
Once more, we  shall use duality arguments.
Let $\psi\in H^2(\Omega)\cap H^1_0(\Omega)$ be the solution of the adjoint problem (see \eqref{def:adjoint})
\begin{equation}
%\div(-\diffp(\xx)\nabla\psi)- \bb(\xx)\cdot\nabla\psi + \reaction(\xx)\,\psi=\p-\ph,
\Amf^*\psi=\p-\ph,
\end{equation}
and let $\psiI \in \Vhnk$ be its interpolant, for which it holds
\begin{equation}\label{stima-psi}
\|\psi-\psiI\|_1 \le C h |\psi|_2 \le C h \|\p-\ph\|_0.
\end{equation}
%%
%Let moreover $\up$ be an approximation of $\p$ which is piecewise in $\Pp_k$.
Then:
\begin{equation}
\begin{aligned}
\|\p-\ph\|^2_0& = B(\p-\ph, \psi)= B(\p,\psi-\psiI)+B(\p,\psiI)-B(\ph,\psi)\\
&= B(\p,\psi-\psiI)+(f,\psiI)+B_h(\ph,\psiI)-(f_h,\psiI)-B(\ph,\psi)\\
&=B(\p-\ph,\psi-\psiI)+(f-f_h,\psiI) +B_h(\ph,\psiI)-B(\ph,\psiI)\\
&=B(\p-\ph,\psi-\psiI)+(f-f_h,\psiI-\pLduem \psiI) \\
&+B_h(\ph-\up,\psiI)-B(\ph-\up,\psiI)\\
&+B_h(\up,\psiI)-B(\up,\psiI).
\end{aligned}
\end{equation}
Next:
\begin{equation}
\begin{aligned}
&B(\p-\ph,\psi-\psiI)\le C h^{k+1}\|\p\|_{k+1} \|\p-\ph\|_0,\quad \\
&(f-f_h,\psiI-\pLduem \psiI)\le C h^{k+1} |f|_{k} \|\p-\ph\|_0.
\end{aligned}
\end{equation}
From \eqref{cons-interp} with $\q_I=\psiI$, and \eqref{stima-psi}
%From \eqref{uffa4}, \eqref{bh-b} - \eqref{ch-c} with $\p=\up,~\q=\psiI$, and \eqref{stima-psi} we obtain
\begin{equation}
B_h(\up,\psiI)-B(\up,\psiI)
%{\color{red}\le C h^{k+1}\Big(|\p|_{k+1}+ |\diffp \nabla \p|_k+ |\trasp\cdot \nabla \p|_k+ |\reaction \p|_{k}\Big) \|\p-\ph\|_0}\\
\le C_{\diffp,\bb,\reaction} h^{k+1}\|\p\|_{k+1} \|\p-\ph\|_0,
\end{equation}
and from \eqref{ah-a}--\eqref{ch-c} with $\p=\ph-\up,~\q=\psiI$, adding and subtracting $\p$,
\begin{equation}
B_h(\ph-\up,\psiI)-B(\ph-\up,\psiI)\le C_{\diffp,\bb,\reaction} h^{k+1}\|\p\|_{k+1} \|\p-\ph\|_0.
\end{equation}
Hence,
\begin{equation}
\|\p-\ph\|_0 \le C_{\diffp,\bb,\reaction} h^{k+1}(\|\p\|_{k+1} + |f|_{k}).
\end{equation}
\end{proof}

\begin{remark} As it can be easily seen from our proofs, the extension to the three-dimensional case would not present major difficulties. We chose to skip it here in order to avoid the use of a  heavier notation and  a certain
amount of technicalities.
\end{remark}

\section{Numerical Experiments}
\label{Num-Exp}

\newcommand{\pex}{p_\textup{ex}}
\newcommand{\xmax}{x_\textup{max}}
\newcommand{\ymax}{y_\textup{max}}

We will consider problem \eqref{Pb-cont} on the unit square with
\begin{equation}
\diffp(x,y) = \begin{pmatrix}y^2+1&-xy\\-xy&x^2+1\end{pmatrix},\quad
\bb=(x,y),\quad
\reaction = x^2+y^3,
\end{equation}
and with right hand side and Dirichlet boundary conditions defined
in such a way that the exact solution is
\begin{equation}
\pex(x,y) := x^2 y + \sin(2\pi x) \sin(2\pi y)+2.
\end{equation}
We will show, in a loglog scale, the convergence curves of the error in $L^2$ and $H^1$
between $\pex$ and the solution $\ph$
given by the Virtual Element Method \eqref{vem-approx-n}.
As the VEM solution $\ph$ is not explicitly known inside the elements, we compare $\pex$ with the $L^2-$projection of $\p_h$ onto $\Pp_k$, that is, with $\Pi^0_k\,\p_h$. We will also show the behaviour of $|\pex -\Pi^0_k\,\p_h|$ at the
maximum point of $\pex$ which is approximately at $(\xmax,\ymax)=(0.781,0.766)$.
%which is internal to an element in all the meshes.
%

\subsection{Meshes}

For the convergence test we consider four sequences of meshes.% described below.

The first sequence of meshes (labelled \texttt{Lloyd-0}) is a
random Voronoi polygonal tessellation of the unit square
in 25, 100, 400 and 1600 polygons.
The second sequence (labelled \texttt{Lloyd-100})
is obtained starting from the previous one and
performing 100 Lloyd iterations  leading to a Centroidal Voronoi Tessellation (CVT)
(see e.g.  \cite{Du:Faber99}).
%
%The first two sequences of meshes are obtained from the
%PolyMesher software \cite{TPPM12}.
%PolyMesher is a polygonal mesh generator that starts from a
%random Voronoi mesh and then regularizes it by means of Lloyd iterations,
%leading to a Centroidal Voronoi Tessellation (CVT).
%%
%The first sequence of meshes (labelled \texttt{Lloyd-0}) is just the first try of
%PolyMesher, i.e., a random Voronoi tessellation of the square, while the second family
%(labelled \texttt{Lloyd-100}) is obtained after 100 Lloyd iterations
%and is much more regular. Both sequences consist of meshes with
%25, 100, 400 and 1600 polygons.
The 100-polygon mesh of each family is shown in Fig.~\ref{fig:Lloyd-0}
(\texttt{Lloyd-0}) and in Fig.~\ref{fig:Lloyd-100} (\texttt{Lloyd-100})
respectively.
\begin{figure}
\hfill
 \begin{minipage}[b]{0.45\textwidth}
  \begin{center}
  \includegraphics[width=\textwidth]{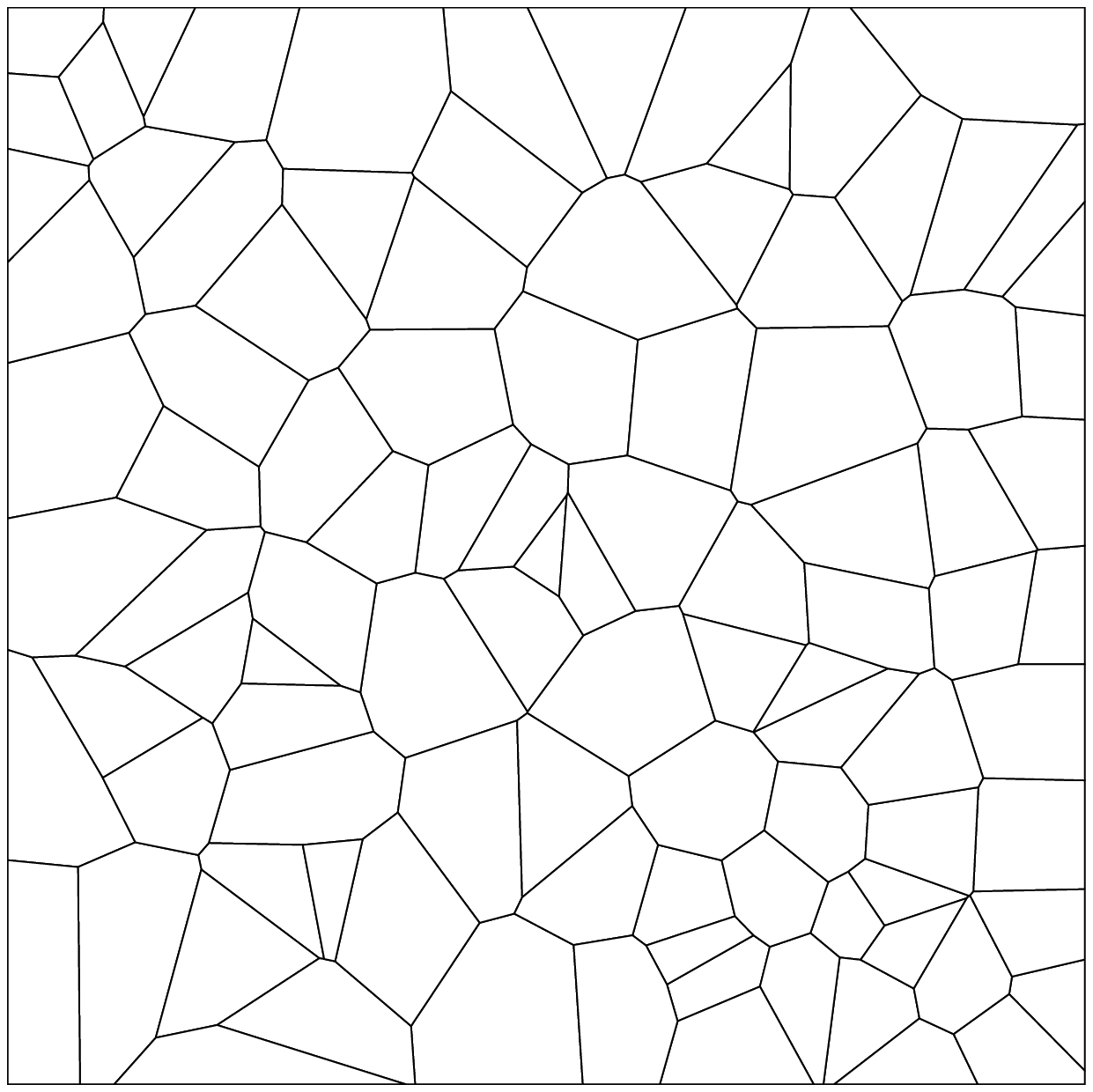}
  \end{center}
  \caption{\texttt{Lloyd-0} mesh}
 \label{fig:Lloyd-0}
 \end{minipage}
\hfill
 \begin{minipage}[b]{0.45\textwidth}
  \begin{center}
  \includegraphics[width=\textwidth]{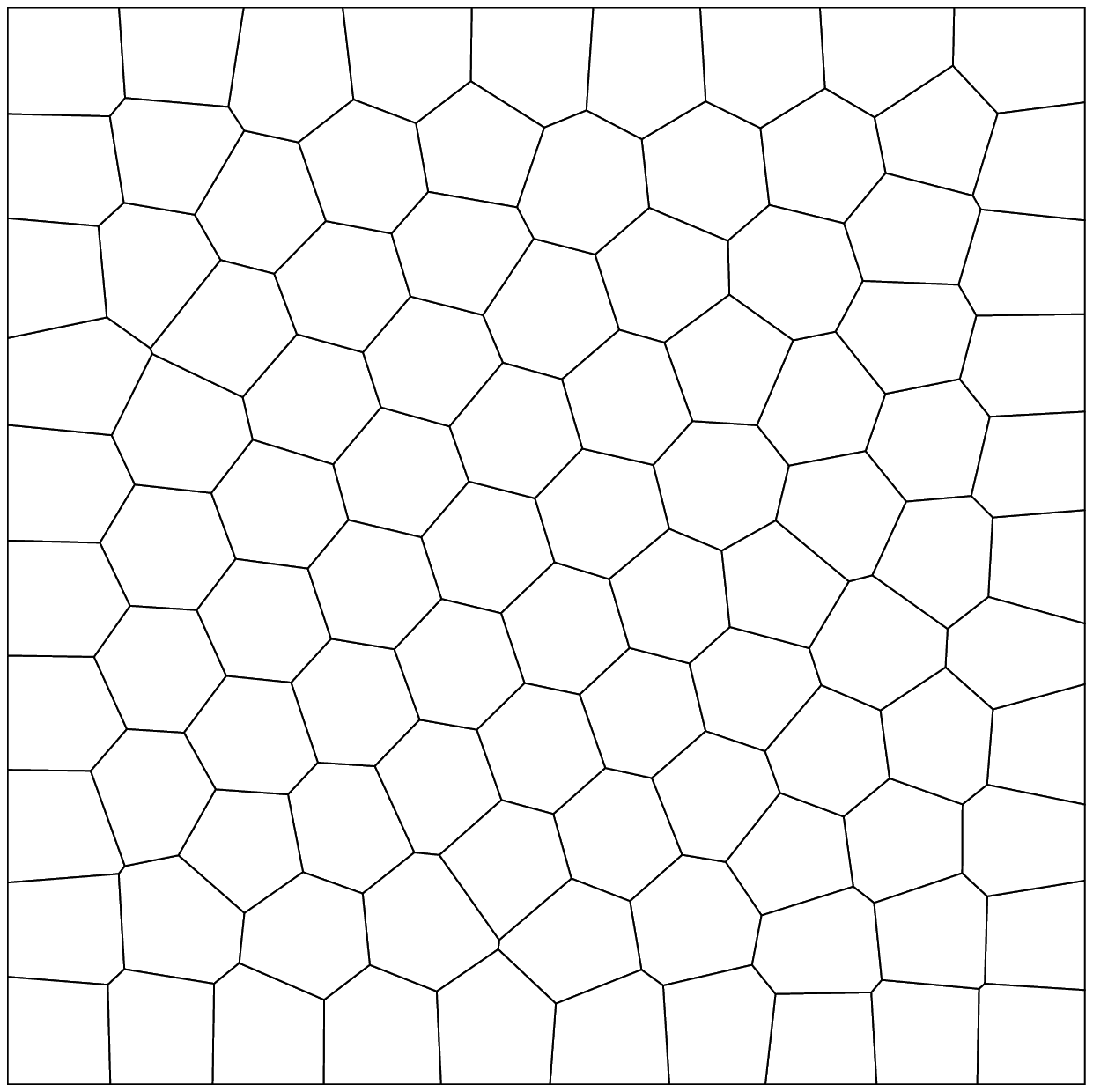}
  \end{center}
  \caption{\texttt{Lloyd-100} mesh}
 \label{fig:Lloyd-100}
 \end{minipage}
\hfill
\end{figure}

The third sequence of meshes (labelled \texttt{square}) is simply a decomposition
of the domain in 25, 100, 400 and 1600 equal squares, while the fourth sequence
(labelled \texttt{concave}) is obtained from the previous one by subdividing
each small square into two non-convex (quite nasty) polygons.
As before, the second meshes of the two sequences are shown in Fig. \ref{fig:square}
and in Fig. \ref{fig:concave} respectively.
\begin{figure}
\hfill
 \begin{minipage}[b]{0.45\textwidth}
  \begin{center}
  \includegraphics[width=\textwidth]{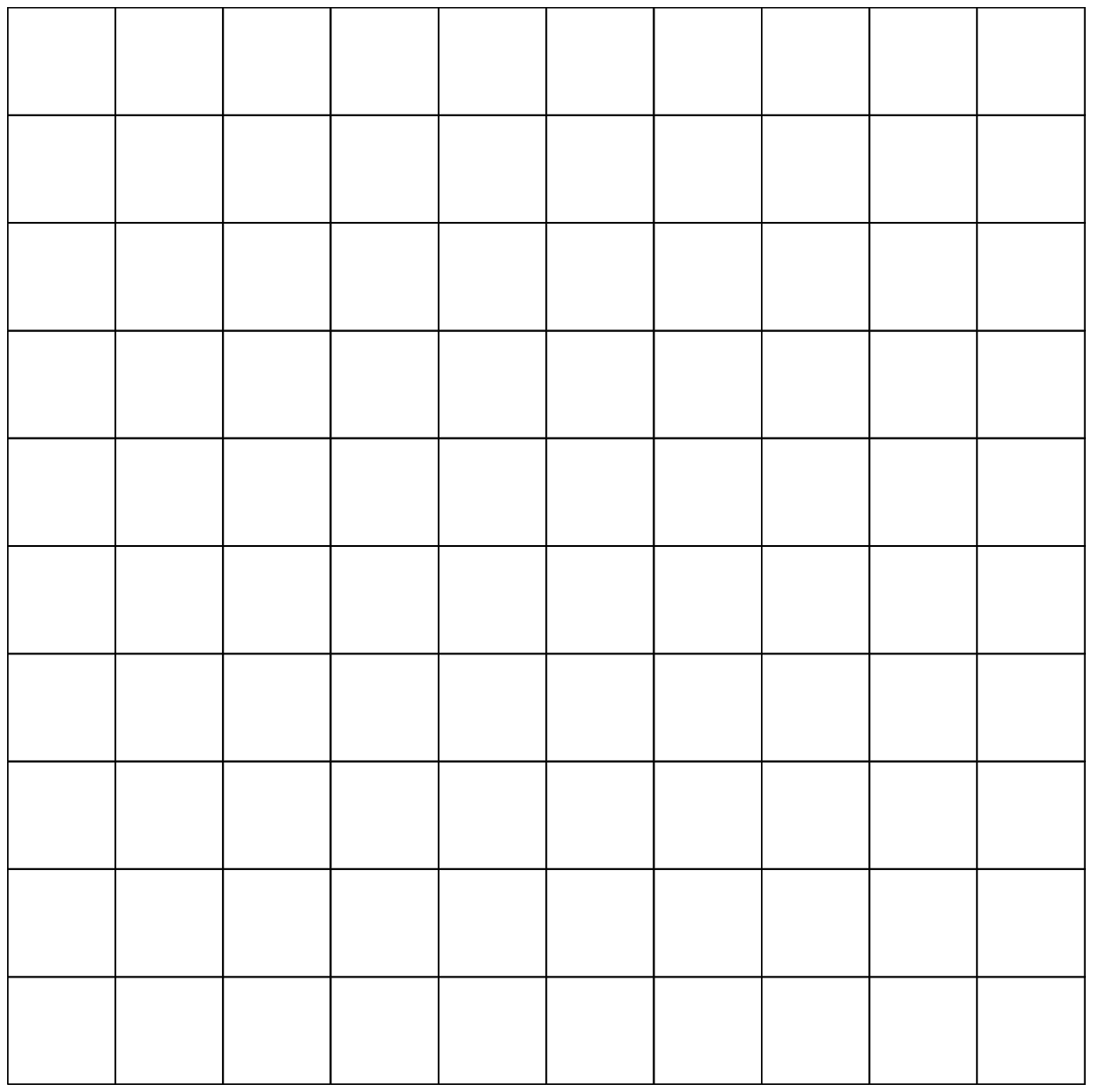}
  \end{center}
  \caption{\texttt{square} mesh}
 \label{fig:square}
 \end{minipage}
\hfill
 \begin{minipage}[b]{0.45\textwidth}
  \begin{center}
  \includegraphics[width=\textwidth]{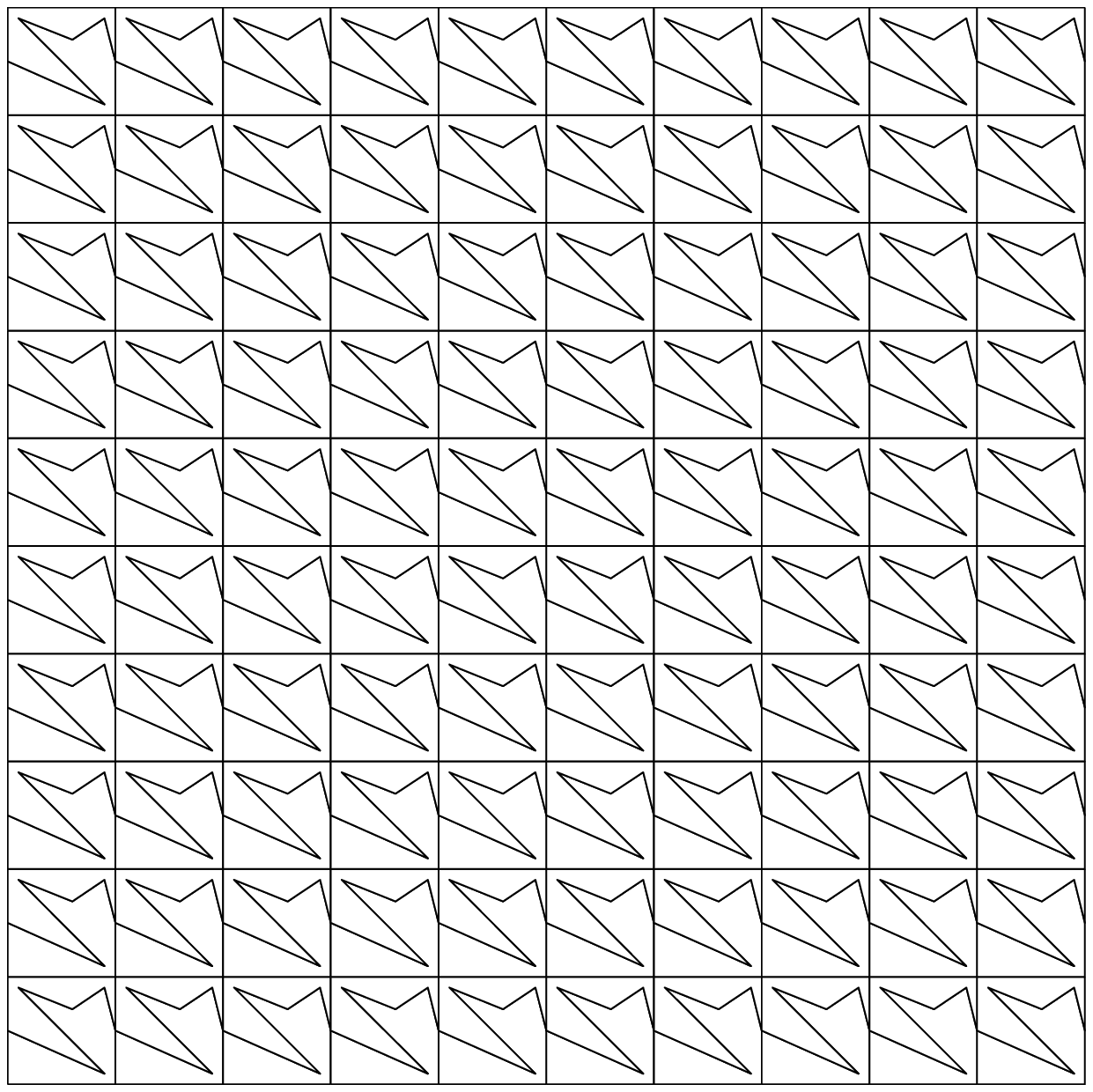}
  \end{center}
  \caption{\texttt{concave} mesh}
 \label{fig:concave}
 \end{minipage}
\hfill
\end{figure}

\subsection{Case $k=1$}

We start to show the convergence results for $k=1$. In Figs. \ref{fig:errL2-k=1-P0km1_nabla} and \ref{fig:errH1-k=1-P0km1_nabla}
we report the relative error in $L^2$ and $H^1$, respectively, for the four mesh sequences.
In Fig. \ref{fig:errp-k=1-P0km1_nabla} we report the relative error at the maximum
point $(\xmax,\ymax)$.
Finally, Fig. \ref{fig:errL2-k=1-nabla_PNk} shows the relative
error in $L^2$ obtained with the method \eqref{simple-minded} (that is, the simple-minded extension of \cite{volley}).
As observed in Remark \ref{rem-sm},
$\Pi^0_k\nabla\equiv\nabla\Pi^\nabla_k$ for $k=1$,
%as it can be seen comparing
hence the graphs of Fig. \ref{fig:errL2-k=1-P0km1_nabla} and of
Fig. \ref{fig:errL2-k=1-nabla_PNk} are identical.
\begin{figure}
\hfill
 \begin{minipage}[b]{0.49\textwidth}
  \begin{center}
  \includegraphics[width=\textwidth]{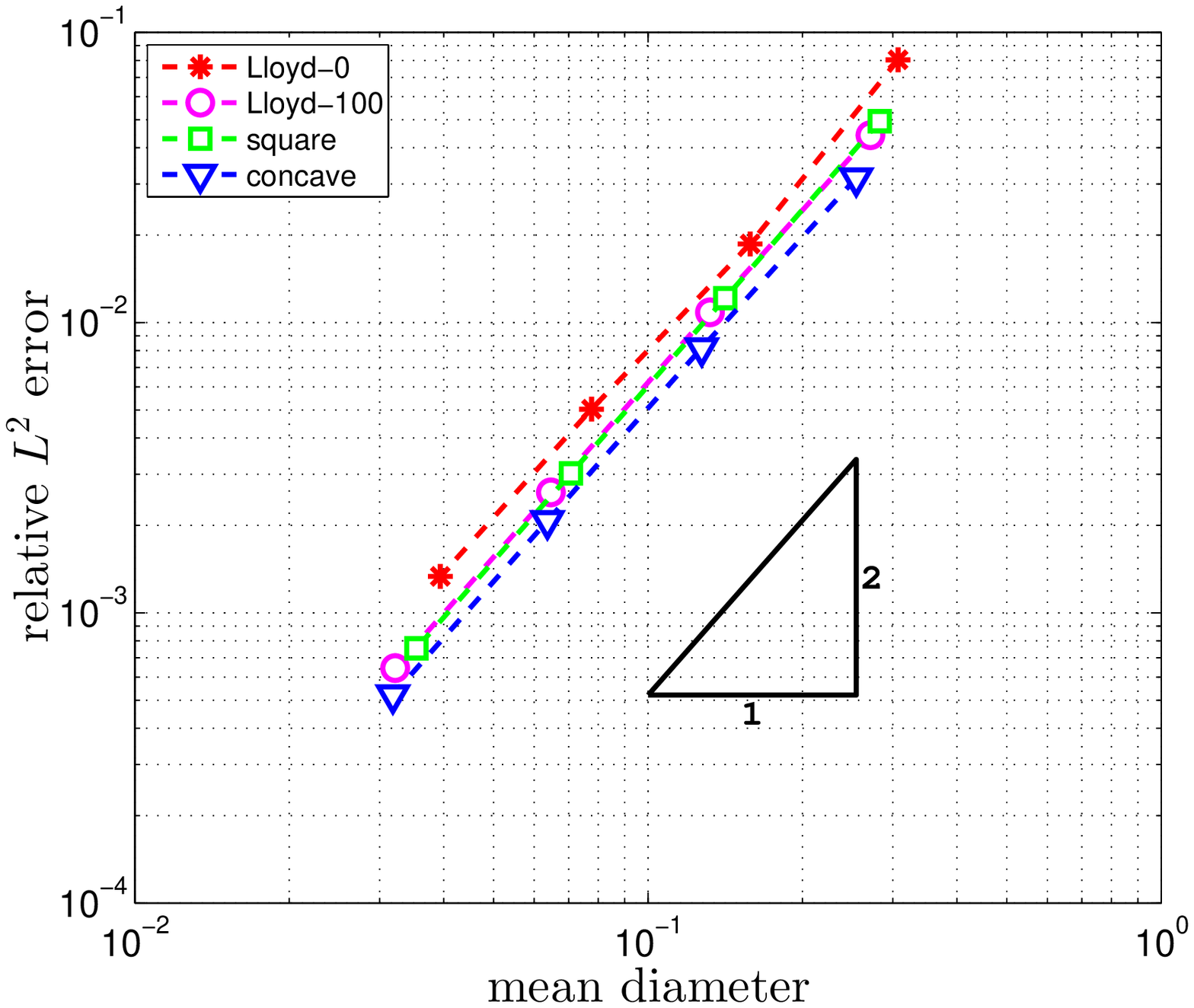}
  \end{center}
  \caption{$k=1$, relative $L^2$ error}
 \label{fig:errL2-k=1-P0km1_nabla}
 \end{minipage}
\hfill
 \begin{minipage}[b]{0.49\textwidth}
  \begin{center}
  \includegraphics[width=\textwidth]{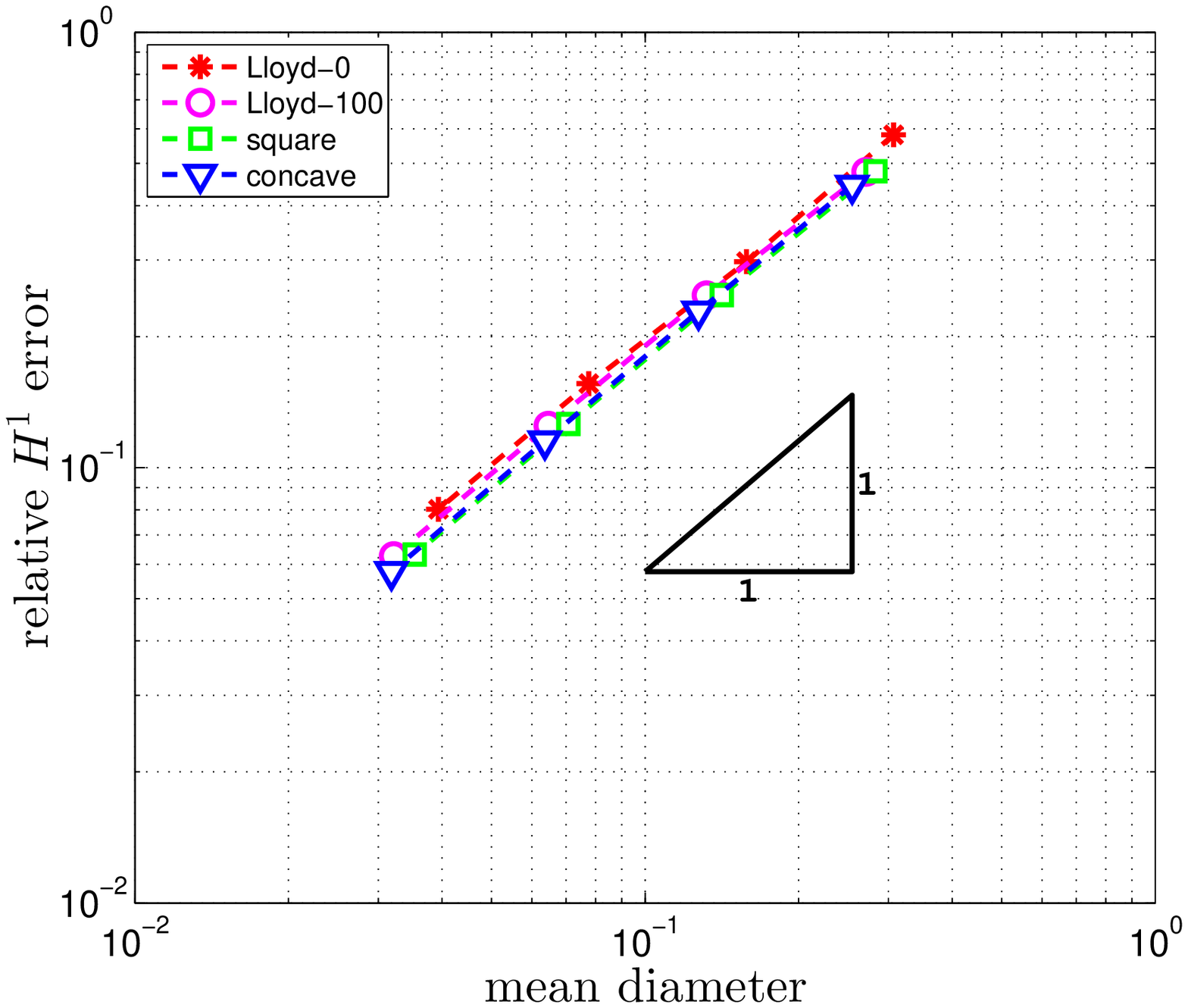}
  \end{center}
  \caption{$k=1$, relative $H^1$ error}
 \label{fig:errH1-k=1-P0km1_nabla}
 \end{minipage}
\hfill
\end{figure}
\begin{figure}
\hfill
 \begin{minipage}[b]{0.49\textwidth}
  \begin{center}
  \includegraphics[width=\textwidth,height=6.3cm]{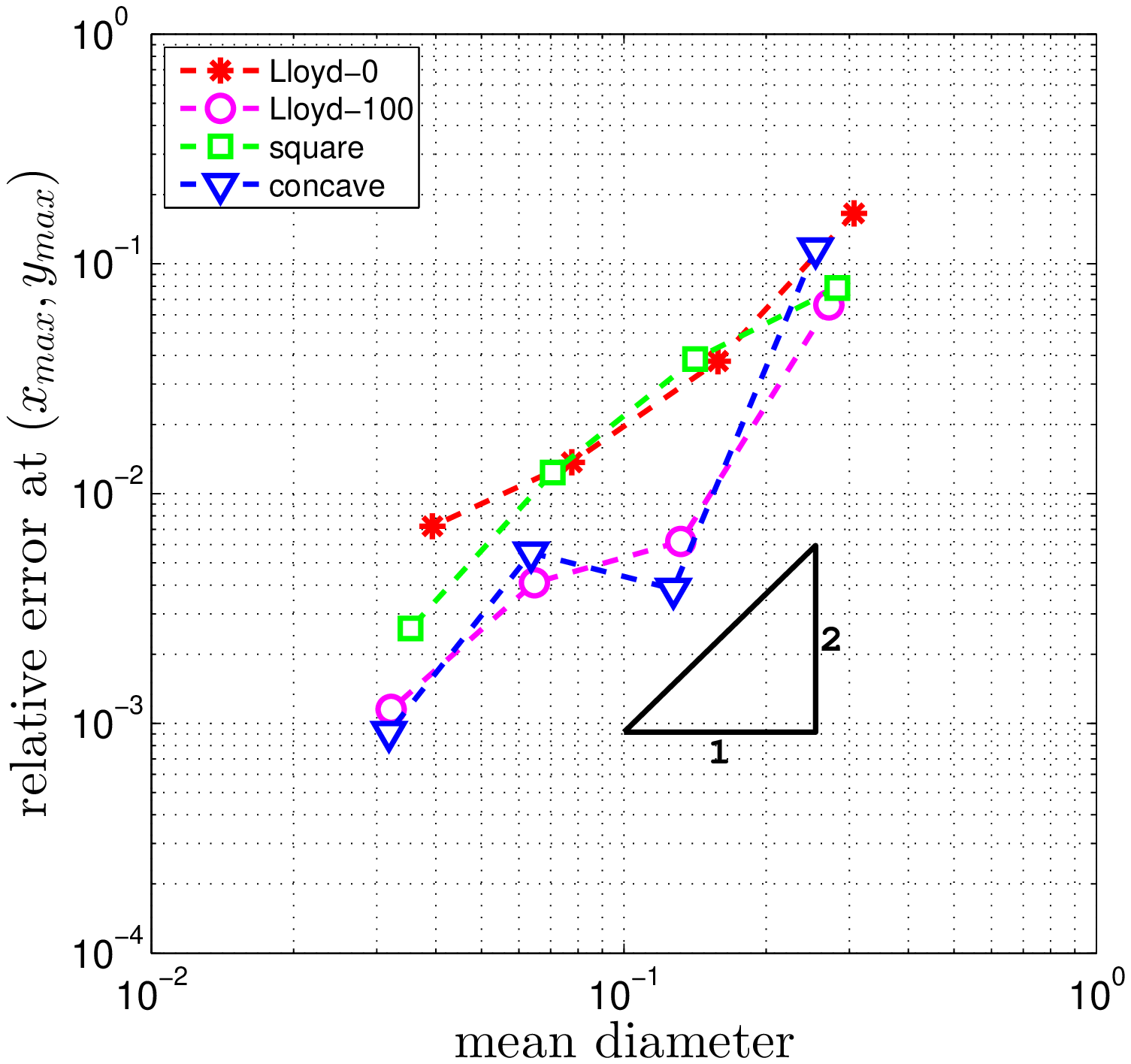}
  \end{center}
  \caption{$k=1$, relative error at $(\xmax,\ymax)$}
 \label{fig:errp-k=1-P0km1_nabla}
 \end{minipage}
\hfill
 \begin{minipage}[b]{0.49\textwidth}
  \begin{center}
  \includegraphics[width=\textwidth]{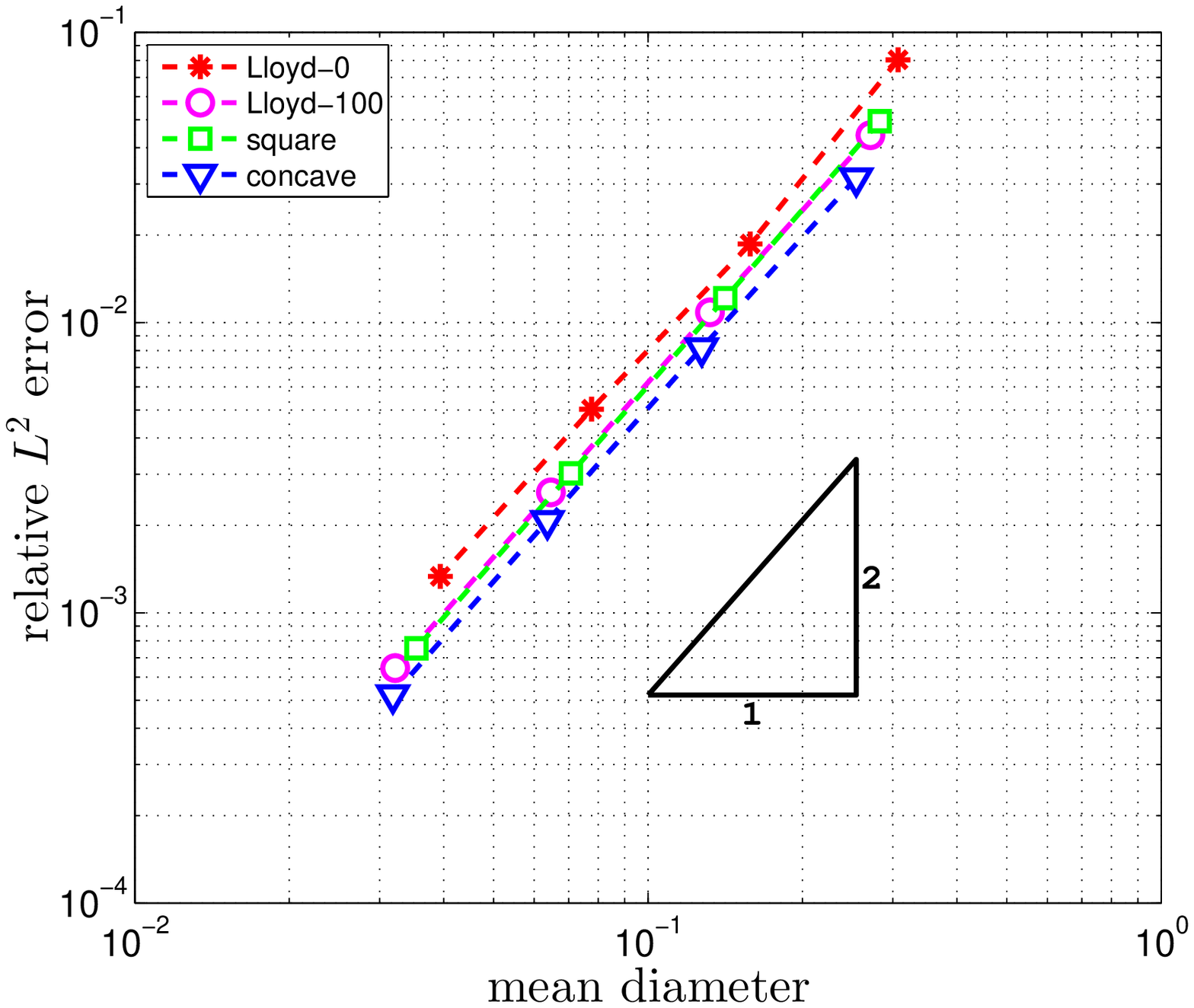}
  \end{center}
  \caption{$k=1$, relative $L^2$ error for method \eqref{simple-minded}}
 \label{fig:errL2-k=1-nabla_PNk}
 \end{minipage}
\hfill
\end{figure}

\subsection{Case $k=4$}
We show the convergence results for $k=4$; we proceed as done in the case $k=1$.
In Figs.~\ref{fig:errL2-k=4-P0km1_nabla} and \ref{fig:errH1-k=4-P0km1_nabla}
we report the relative error in $L^2$ and in $H^1$, respectively, on the four mesh sequences.
In Fig.~\ref{fig:errp-k=4-P0km1_nabla} we report the relative error at the maximum
point $(\xmax,\ymax)$.
The last figure (Fig.~\ref{fig:errL2-k=4-nabla_PNk}) shows the relative error
in $L^2$ obtained
with the method \eqref{simple-minded}. As announced, a heavy loss in the order of convergence is produced.
\begin{figure}
\hfill
 \begin{minipage}[b]{0.49\textwidth}
  \begin{center}
  \includegraphics[width=\textwidth]{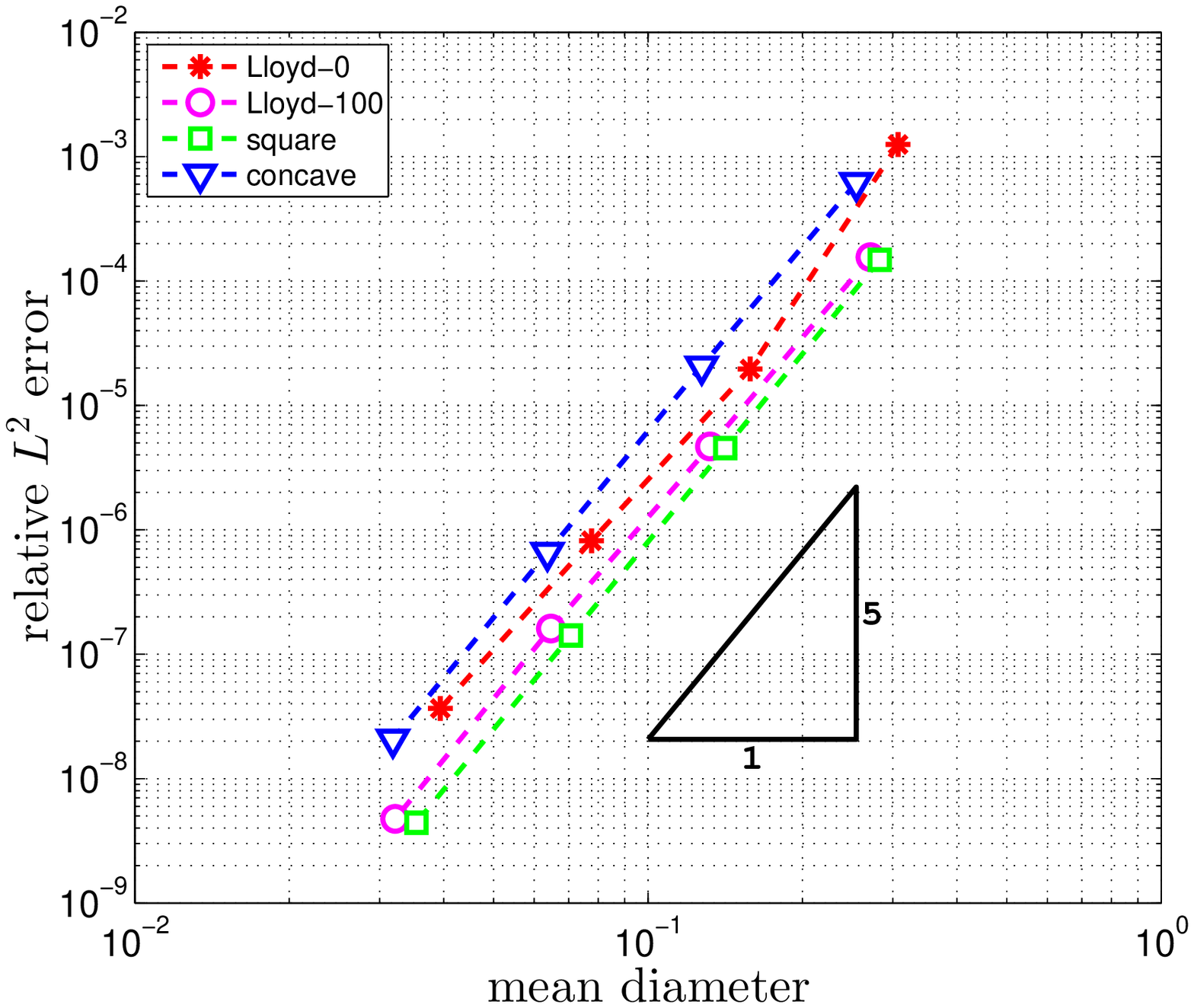}
  \end{center}
  \caption{$k=4$, relative $L^2$ error}
 \label{fig:errL2-k=4-P0km1_nabla}
 \end{minipage}
\hfill
 \begin{minipage}[b]{0.49\textwidth}
  \begin{center}
  \includegraphics[width=\textwidth]{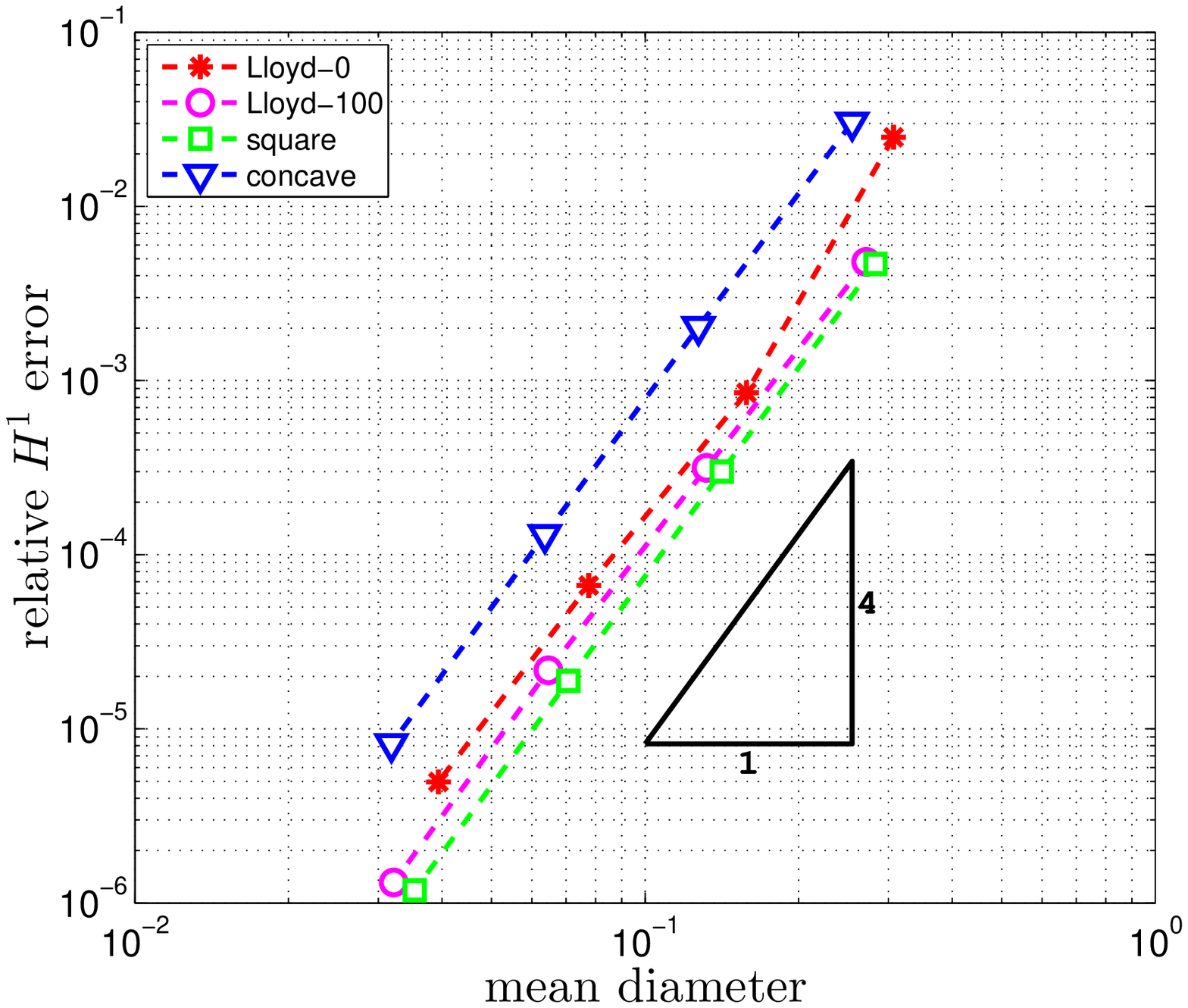}
  \end{center}
  \caption{$k=4$, relative $H^1$ error}
 \label{fig:errH1-k=4-P0km1_nabla}
 \end{minipage}
\hfill
\end{figure}
\begin{figure}
\hfill
 \begin{minipage}[b]{0.49\textwidth}
  \begin{center}
  \includegraphics[width=\textwidth,height=6.3cm]{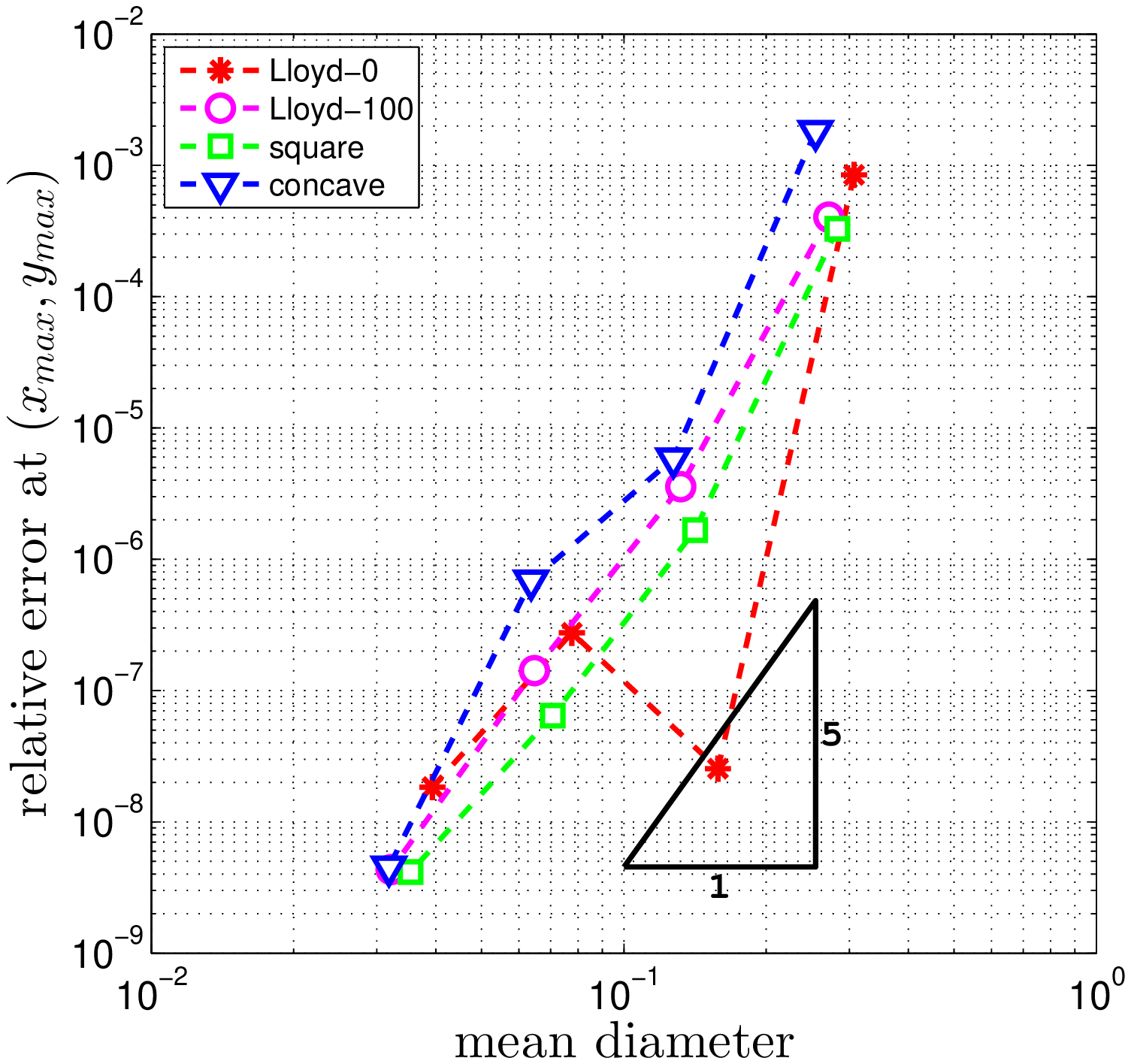}
  \end{center}
  \caption{$k=4$, relative error at $(\xmax,\ymax)$}
 \label{fig:errp-k=4-P0km1_nabla}
 \end{minipage}
\hfill
 \begin{minipage}[b]{0.49\textwidth}
  \begin{center}
  \includegraphics[width=\textwidth]{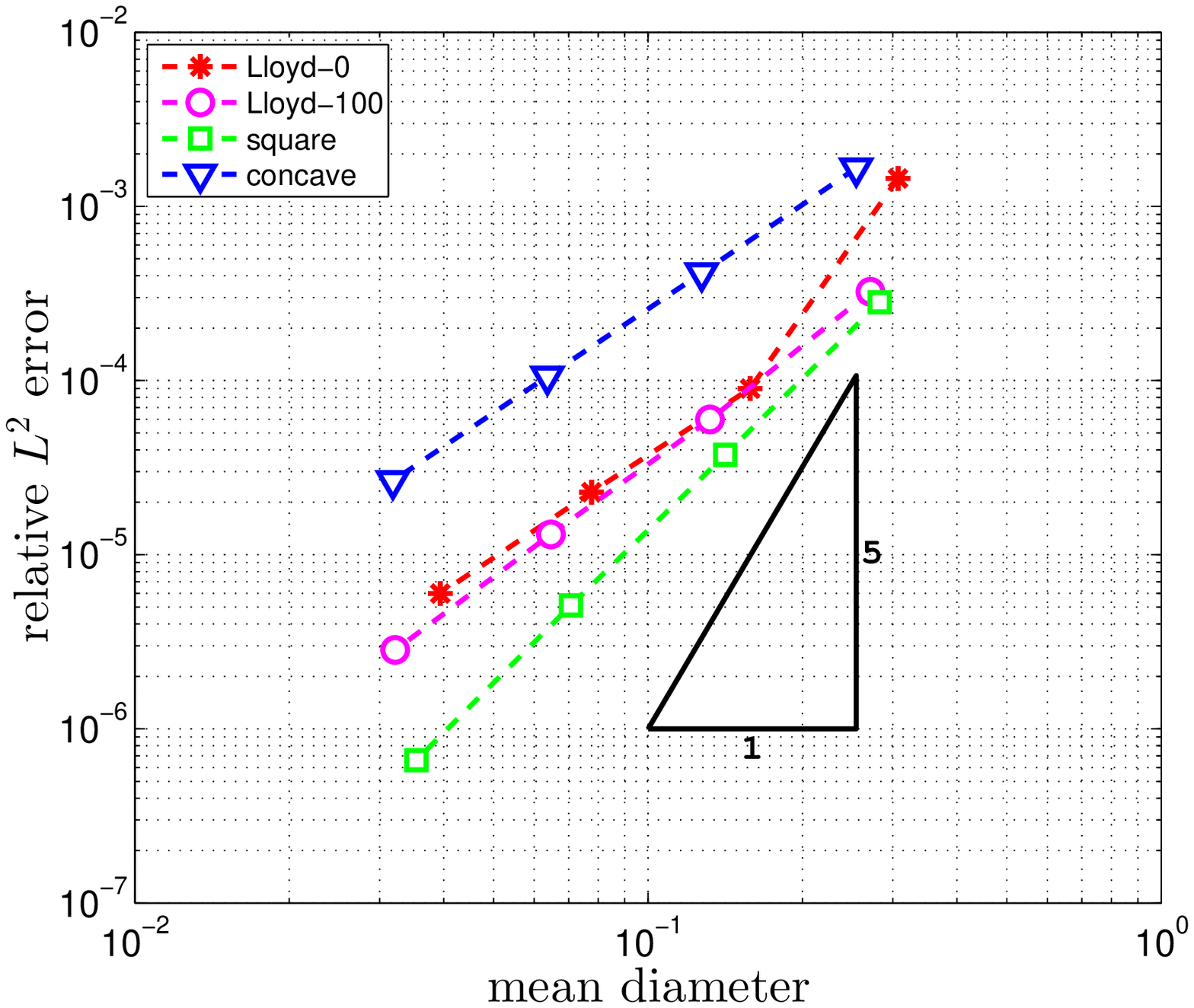}
  \end{center}
  \caption{$k=4$, relative $L^2$ error for the method \eqref{simple-minded}}
 \label{fig:errL2-k=4-nabla_PNk}
 \end{minipage}
\hfill
\end{figure}

\bigskip
We conclude that the Virtual Element Method behaves as expected and shows a remarkable
stability with respect to the shape of the mesh polygons.

%\bibliographystyle{amsplain}

%\bibliography{advdiff-bibliography}

\begin{thebibliography}{10}

\bibitem{projectors}
B.~Ahmad, A.~Alsaedi, F.~Brezzi, L.~D. Marini, and A.~Russo, \emph{Equivalent
  projectors for virtual element methods}, Comput. Math. Appl. \textbf{66}
  (2013), no.~3, 376--391.

\bibitem{ABMV14}
P.~F. Antonietti, L.~Beir{\~a}o~da Veiga, D.~Mora, and M.~Verani, \emph{A
  stream virtual element formulation of the {S}tokes problem on polygonal
  meshes}, SIAM J. Numer. Anal. \textbf{52} (2014), no.~1, 386--404.

\bibitem{Antonietti}
P.~F. Antonietti, N.~Bigoni, and M.~Verani, \emph{Mimetic discretizations of
  elliptic control problems}, J. Sci. Comput. \textbf{56} (2013), no.~1,
  14--27.

\bibitem{Arroyo-Ortiz}
M.~Arroyo and M.~Ortiz, \emph{Local maximum-entropy approximation schemes},
  Meshfree methods for partial differential equations {III}, Lect. Notes
  Comput. Sci. Eng., vol.~57, Springer, Berlin, 2007, pp.~1--16.

\bibitem{MESHLESS12}
I.~Babu{\v{s}}ka, U.~Banerjee, and J.~E. Osborn, \emph{Survey of meshless and
  generalized finite element methods: a unified approach}, Acta Numer.
  \textbf{12} (2003), 1--125.

\bibitem{GFEM13}
\bysame, \emph{Generalized finite element methods -- main ideas, results and
  perspective}, Int. J. Comput. Methods \textbf{01} (2004), no.~01, 67--103.

\bibitem{MESHLESS16}
I.~Babu{\v{s}}ka and J.~M. Melenk, \emph{The partition of unity method},
  Internat. J. Numer. Methods Engrg. \textbf{40} (1997), no.~4, 727--758.

\bibitem{GFEM17}
I.~Babu{\v{s}}ka and J.~E. Osborn, \emph{Generalized finite element methods:
  their performance and their relation to mixed methods}, SIAM J. Numer. Anal.
  \textbf{20} (1983), no.~3, 510--536.

\bibitem{volley}
L.~Beir{\~a}o~da Veiga, F.~Brezzi, A.~Cangiani, G.~Manzini, L.~D. Marini, and
  A.~Russo, \emph{Basic principles of virtual element methods}, Math. Models
  Methods Appl. Sci. \textbf{23} (2013), no.~1, 199--214.

\bibitem{VEM-elasticity}
L.~Beir{\~a}o~da Veiga, F.~Brezzi, and L.~D. Marini, \emph{Virtual elements for
  linear elasticity problems}, SIAM J. Numer. Anal. \textbf{51} (2013), no.~2,
  794--812.

\bibitem{super-misti}
L.~Beir{\~a}o~da Veiga, F.~Brezzi, L.~D. Marini, and A.~Russo,
  \emph{${H}(\div)$ and ${H}(\curl)$-conforming {VEM}}, submitted.

\bibitem{hitchhikers}
\bysame, \emph{The hitchhiker's guide to the virtual element method}, Math.
  Models Methods Appl. Sci. \textbf{24} (2014), no.~8, 1541--1573.

\bibitem{MFD7}
L.~Beir{\~a}o~da Veiga, V.~Gyrya, K.~Lipnikov, and G.~Manzini, \emph{Mimetic
  finite difference method for the {S}tokes problem on polygonal meshes}, J.
  Comput. Phys. \textbf{228} (2009), no.~19, 7215--7232.

\bibitem{BLM09}
L.~Beir{\~a}o~da Veiga, K.~Lipnikov, and G.~Manzini, \emph{Convergence analysis
  of the high-order mimetic finite difference method}, Numer. Math.
  \textbf{113} (2009), no.~3, 325--356.

\bibitem{BLM11}
\bysame, \emph{Arbitrary-order nodal mimetic discretizations of elliptic
  problems on polygonal meshes}, SIAM J. Numer. Anal. \textbf{49} (2011),
  no.~5, 1737--1760.

\bibitem{MFD23}
\bysame, \emph{The mimetic finite difference method for elliptic problems},
  MS\&A. Modeling, Simulation and Applications, vol.~11, Springer-Verlag, 2014.

\bibitem{BeiraodaVeiga:Manzini:2008b}
L.~Beir{\~a}o~da Veiga and G.~Manzini, \emph{A higher-order formulation of the
  mimetic finite difference method}, SIAM J. Sci. Comput. \textbf{31} (2008),
  no.~1, 732--760.

\bibitem{BM13}
\bysame, \emph{A virtual element method with arbitrary regularity}, IMA J.
  Numer. Anal. \textbf{34} (2014), no.~2, 759--781.

\bibitem{Berrone-VEM}
M.~F. Benedetto, S.~Berrone, S.~Pieraccini, and S.~Scial{\`o}, \emph{The
  virtual element method for discrete fracture network simulations}, Comput.
  Methods Appl. Mech. Engrg. \textbf{280} (2014), 135--156.

\bibitem{Bishop}
J.~E. Bishop, \emph{A displacement-based finite element formulation for general
  polyhedra using harmonic shape functions}, Internat. J. Numer. Methods Engrg.
  \textbf{97} (2014), no.~1, 1--31.

\bibitem{Bochev-Hyman}
P.~B. Bochev and J.~M. Hyman, \emph{Principles of mimetic discretizations of
  differential operators}, Compatible spatial discretizations, IMA Vol. Math.
  Appl., vol. 142, Springer, New York, 2006, pp.~89--119.

\bibitem{Bonelle:Ern:2014}
J.~Bonelle and A.~Ern, \emph{Analysis of compatible discrete operator schemes
  for elliptic problems on polyhedral meshes}, ESAIM Math. Model. Numer. Anal.
  \textbf{48} (2014), no.~2, 553--581.

\bibitem{Brezzi:Buffa:Lipnikov:2009}
F.~Brezzi, A.~Buffa, and K.~Lipnikov, \emph{Mimetic finite differences for
  elliptic problems}, M2AN Math. Model. Numer. Anal. \textbf{43} (2009), no.~2,
  277--295.

\bibitem{Brezzi:Lipnikov:Shashkov:2005}
F.~Brezzi, K.~Lipnikov, and M.~Shashkov, \emph{Convergence of mimetic finite
  difference method for diffusion problems on polyhedral meshes with curved
  faces}, Math. Models Methods Appl. Sci. \textbf{16} (2006), no.~2, 275--297.

\bibitem{Brezzi:Lipnikov:Shashkov:2006}
\bysame, \emph{Convergence of mimetic finite difference method for diffusion
  problems on polyhedral meshes with curved faces}, Math. Models Methods Appl.
  Sci. \textbf{16} (2006), no.~2, 275--297.

\bibitem{Brezzi:Lipnikov:Shashkov:Simoncini:2007}
F.~Brezzi, K.~Lipnikov, M.~Shashkov, and V.~Simoncini, \emph{A new
  discretization methodology for diffusion problems on generalized polyhedral
  meshes}, Comput. Methods Appl. Mech. Engrg. \textbf{196} (2007), no.~37-40,
  3682--3692.

\bibitem{Brezzi:Lipnikov:Simoncini:2005}
F.~Brezzi, K.~Lipnikov, and V.~Simoncini, \emph{A family of mimetic finite
  difference methods on polygonal and polyhedral meshes}, Math. Models Methods
  Appl. Sci. \textbf{15} (2005), no.~10, 1533--1551.

\bibitem{Brezzi:Marini:plates}
F.~Brezzi and L.~D. Marini, \emph{Virtual element methods for plate bending
  problems}, Comput. Methods Appl. Mech. Engrg. \textbf{253} (2013), 455--462.

\bibitem{hourglass}
A.~Cangiani, G.~Manzini, A.~Russo, and N.~Sukumar, \emph{Hourglass
  stabilization and the virtual element method}, Internat. J. Numer. Methods
  Engrg. (2015), to appear.

\bibitem{Cockburn-IMU}
B.~Cockburn, \emph{The hybridizable discontinuous {G}alerkin methods},
  Proceedings of the {I}nternational {C}ongress of {M}athematicians. {V}olume
  {IV}, Hindustan Book Agency, New Delhi, 2010, pp.~2749--2775.

\bibitem{Cockburn-Jay-Lazarov}
B.~Cockburn, J.~Gopalakrishnan, and R.~Lazarov, \emph{Unified hybridization of
  discontinuous {G}alerkin, mixed, and continuous {G}alerkin methods for second
  order elliptic problems}, SIAM J. Numer. Anal. \textbf{47} (2009), no.~2,
  1319--1365.

\bibitem{Cockburn-Guzman-HWang}
B.~Cockburn, J.~Guzm{\'a}n, and H.~Wang, \emph{Superconvergent discontinuous
  {G}alerkin methods for second-order elliptic problems}, Math. Comp.
  \textbf{78} (2009), no.~265, 1--24.

\bibitem{DiPietro-Ern-1}
D.~Di~Pietro and A.~Alexandre~Ern, \emph{A hybrid high-order locking-free
  method for linear elasticity on general meshes}, Comput. Methods Appl. Mech.
  Engrg. \textbf{283} (2015), no.~0, 1--21.

\bibitem{DiPietro-Ern}
D.~Di~Pietro and A.~Ern, \emph{Mathematical aspects of discontinuous {G}alerkin
  methods}, Math\'ematiques \& Applications (Berlin) [Mathematics \&
  Applications], vol.~69, Springer, Heidelberg, 2012.

\bibitem{DiPietro-Ern-2}
\bysame, \emph{{A family of arbitrary-order mixed methods for heterogeneous
  anisotropic diffusion on general meshes}}, December 2013.

\bibitem{DiPietro-Ern-3}
\bysame, \emph{Hybrid high-order methods for variable-diffusion problems on
  general meshes}, C. R. Acad. Sci. Paris, Ser. I (2014), in press.

\bibitem{DiPietro-Ern--Lemaire}
D.~Di~Pietro, A.~Ern, and S.~Lemaire, \emph{An arbitrary-order and
  compact-stencil discretization of diffusion on general meshes based on local
  reconstruction operators}, Comput. Methods Appl. Math. \textbf{14} (2014),
  no.~4, 461--472.

\bibitem{Droniou:Eymard:Gallouet:Herbin:2010}
J.~Droniou, R.~Eymard, T.~Gallou{\"e}t, and R.~Herbin, \emph{A unified approach
  to mimetic finite difference, hybrid finite volume and mixed finite volume
  methods}, Math. Models Methods Appl. Sci. \textbf{20} (2010), no.~2,
  265--295.

\bibitem{Droniou-gradient}
\bysame, \emph{Gradient schemes: a generic framework for the discretisation of
  linear, nonlinear and nonlocal elliptic and parabolic equations}, Math.
  Models Methods Appl. Sci. \textbf{23} (2013), no.~13, 2395--2432.

\bibitem{Du:Faber99}
Q.~Du, V.~Faber, and M.~Gunzburger, \emph{Centroidal {V}oronoi tessellations:
  applications and algorithms}, SIAM Rev. \textbf{41} (1999), no.~4, 637--676.

\bibitem{GFEM62}
C.~A. Duarte, I.~Babu{\v{s}}ka, and J.~T. Oden, \emph{Generalized finite
  element methods for three-dimensional structural mechanics problems}, Comput.
  \& Structures \textbf{77} (2000), no.~2, 215--232.

\bibitem{Gillette-2}
M.~Floater, A.~Gillette, and N.~Sukumar, \emph{Gradient bounds for {W}achspress
  coordinates on polytopes}, SIAM J. Numer. Anal. \textbf{52} (2014), no.~1,
  515--532.

\bibitem{FHK06}
M.~Floater, K.~Hormann, and G.~K{\'o}s, \emph{A general construction of
  barycentric coordinates over convex polygons}, Advances in Computational
  Mathematics \textbf{24} (2006), no.~1-4, 311--331.

\bibitem{Fries:Belytschko:2010}
T.-P. Fries and T.~Belytschko, \emph{The extended/generalized finite element
  method: an overview of the method and its applications}, Internat. J. Numer.
  Methods Engrg. \textbf{84} (2010), no.~3, 253--304.

\bibitem{Gain-PhD}
A.~L. Gain, \emph{Polytope-based topology optimization using a mimetic-inspired
  method}, Ph.D. thesis, University of Illinois at Urbana-Champaign, 2013.

\bibitem{Paulino-VEM}
A.~L. Gain, C.~Talischi, and G.~H. Paulino, \emph{On the {V}irtual {E}lement
  {M}ethod for three-dimensional linear elasticity problems on arbitrary
  polyhedral meshes}, Comput. Methods Appl. Mech. Engrg. \textbf{282} (2014),
  132--160.

\bibitem{XFEM84}
A.~Gerstenberger and W.~A. Wall, \emph{An extended finite element
  method/{L}agrange multiplier based approach for fluid-structure interaction},
  Comput. Methods Appl. Mech. Engrg. \textbf{197} (2008), no.~19-20,
  1699--1714.

\bibitem{MESHLESS24}
S.~R. Idelsohn, E.~O{\~n}ate, N.~Calvo, and F.~Del~Pin, \emph{The meshless
  finite element method}, Internat. J. Numer. Methods Engrg. \textbf{58}
  (2003), no.~6, 893--912.

\bibitem{MFD22}
K.~Lipnikov, G.~Manzini, and M.~Shashkov, \emph{Mimetic finite difference
  method}, J. Comput. Phys. \textbf{257} (2014), no.~part B, 1163--1227.

\bibitem{VEM19}
G.~Manzini, A.~Russo, and N.~Sukumar, \emph{New perspectives on polygonal and
  polyhedral finite element methods}, Math. Models Methods Appl. Sci.
  \textbf{24} (2014), no.~8, 1665--1699.

\bibitem{POLY28}
S.~Martin, P.~Kaufmann, M.~Botsch, M.~Wicke, and M.~Gross, \emph{Polyhedral
  finite elements using harmonic basis functions.}, Comput. Graph. Forum
  \textbf{27} (2008), no.~5, 1521--1529.

\bibitem{LIBROXFEM-128}
S.~Mohammadi, \emph{Extended finite element method}, Blackwell Publishing Ltd,
  2008.

\bibitem{VemSteklov}
D.~Mora, G.~Rivera, and R.~Rodr{\'\i}guez, \emph{A virtual element method for
  the {S}teklov eigenvalue problem}, CI2MA Pre-Publicaci{\'o}n 2014-27, 2014.

\bibitem{Mu-Wang-Ye}
L.~Mu, J.~Wang, Y.~Wang, and X.~Ye, \emph{A computational study of the weak
  {G}alerkin method for second-order elliptic equations}, Numer. Algorithms
  \textbf{63} (2013), no.~4, 753--777.

\bibitem{Mu-Wang-Wei-Ye-Zhao}
L.~Mu, J.~Wang, G.~Wei, X.~Ye, and S.~Zhao, \emph{Weak {G}alerkin methods for
  second order elliptic interface problems}, J. Comput. Phys. \textbf{250}
  (2013), 106--125.

\bibitem{Mu-Wang-Ye-2}
L.~Mu, J.~Wang, and X.~Ye, \emph{A weak {G}alerkin finite element method with
  polynomial reduction}, 2013.

\bibitem{GFEM146}
T.~Rabczuk, S.~Bordas, and G.~Zi, \emph{On three-dimensional modelling of crack
  growth using partition of unity methods}, Computers \& Structures \textbf{88}
  (2010), no.~23–24, 1391 -- 1411, Special Issue: Association of
  Computational Mechanics – United Kingdom.

\bibitem{Gillette-1}
A.~Rand, A.~Gillette, and C.~Bajaj, \emph{Interpolation error estimates for
  mean value coordinates over convex polygons}, Advances in Computational
  Mathematics \textbf{39} (2013), no.~2.

\bibitem{Schatz74}
A.~H. Schatz, \emph{An observation concerning {R}itz-{G}alerkin methods with
  indefinite bilinear forms}, Math. Comp. \textbf{28} (1974), 959--962.

\bibitem{Sukumar:Malsch:2006}
N.~Sukumar and E.~A. Malsch, \emph{Recent advances in the construction of
  polygonal finite element interpolants}, Arch. Comput. Methods Engrg.
  \textbf{13} (2006), no.~1, 129--163.

\bibitem{CRACKXFEM180}
N.~Sukumar, N.~Moës, B.~Moran, and T.~Belytschko, \emph{Extended finite
  element method for three-dimensional crack modelling}, Internat. J. Numer.
  Methods Engrg. \textbf{48} (2000), no.~11, 1549--1570.

\bibitem{ST04}
N.~Sukumar and A.~Tabarraei, \emph{Conforming polygonal finite elements},
  Internat. J. Numer. Methods Engrg. \textbf{61} (2004), no.~12, 2045--2066.

\bibitem{Tabarraei:Sukumar:2007}
A.~Tabarraei and N.~Sukumar, \emph{Extended finite element method on polygonal
  and quadtree meshes}, Comput. Methods Appl. Mech. Engrg. \textbf{197} (2007),
  no.~5, 425--438.

\bibitem{POLY37}
C.~Talischi and G.~H. Paulino, \emph{Addressing integration error for polygonal
  finite elements through polynomial projections: a patch test connection},
  Math. Models Methods Appl. Sci. \textbf{24} (2014), no.~8, 1701--1727.

\bibitem{TPPM10}
C.~Talischi, G.~H. Paulino, A.~Pereira, and I.~F.~M. Menezes, \emph{Polygonal
  finite elements for topology optimization: A unifying paradigm}, Internat. J.
  Numer. Methods Engrg. \textbf{82} (2010), no.~6, 671--698.

\bibitem{Wachspress75}
E.~Wachspress, \emph{A rational finite element basis}, Academic Press, Inc.,
  New York-London, 1975, Mathematics in Science and Engineering, Vol. 114.

\bibitem{Wachspress11}
\bysame, \emph{Rational bases for convex polyhedra}, Comput. Math. Appl.
  \textbf{59} (2010), no.~6, 1953--1956.

\bibitem{Wang-1}
J.~Wang and X.~Ye, \emph{A weak {G}alerkin finite element method for
  second-order elliptic problems}, J. Comput. Appl. Math. \textbf{241} (2013),
  103--115.

\bibitem{Wang-2}
\bysame, \emph{A weak {G}alerkin mixed finite element method for second order
  elliptic problems}, Math. Comp. \textbf{83} (2014), no.~289, 2101--2126.

\bibitem{POLY47}
J.~Warren, \emph{Barycentric coordinates for convex polytopes}, Advances in
  Computational Mathematics \textbf{6} (1996), no.~1, 97--108.

\end{thebibliography}

\providecommand{\bysame}{\leavevmode\hbox to3em{\hrulefill}\thinspace}

%\providecommand{\MR}{\relax\ifhmode\unskip\space\fi MR }
% \MRhref is called by the amsart/book/proc definition of \MR.
%\providecommand{\MRhref}[2]{%
%  \href{http://www.ams.org/mathscinet-getitem?mr=#1}{#2}
%}
%\providecommand{\href}[2]{#2}

\end{document}